\documentclass
{amsart}
\usepackage[cmtip,all]{xy}
\usepackage{graphicx}
\usepackage{amssymb}

\usepackage{graphics}
\usepackage{epsf,epsfig,amsmath}
\usepackage{psfrag}

\newtheorem{theorem}{Theorem}[section]
\newtheorem{lemma}[theorem]{Lemma}
\newtheorem{prop}[theorem]{Proposition}

\theoremstyle{definition}
\newtheorem{definition}[theorem]{Definition}
\newtheorem{example}[theorem]{Example}

\theoremstyle{remark}

\numberwithin{equation}{section}

         \newcommand{\cO}{\mathcal O}

\newcommand{\ind}{{\rm Ind}}
\newcommand{\dom}{{\rm Dom}}

\begin{document}

\title{A simple uniform approach to complexes arising from forests}

\author{Mario Marietti}
\address{Universit\`a degli Studi di Roma ``La Sapienza'', Piazzale A. Moro 5, 00185 Roma, Italy}
\email{marietti@mat.uniroma1.it \hspace{10pt} www.mat.uniroma1.it/$\sim$marietti}

\author{Damiano Testa}
\address{Universit\`a degli Studi di Roma ``La Sapienza'', Piazzale A. Moro 5, 00185 Roma, Italy}
\email{testa@mat.uniroma1.it \hspace{23pt} www.mat.uniroma1.it/$\sim$testa}

\begin{abstract}
In this paper we present a unifying approach to study the homotopy type of 
several complexes arising from forests.  We show that this method applies 
uniformly to many complexes that have been extensively studied.
\end{abstract}

\maketitle

\section{Introduction}
In the recent years several complexes arising from forests have been studied 
by different authors with different techniques (see \cite{BM}, \cite{BLN}, 
\cite{EH}, \cite{E}, \cite{Kl}, \cite{K1}, \cite{K2}, \cite{MT1}, \cite{W}).  
The interest in these problems is motivated by applications in 
different contexts, such as graph theory and statistical mechanics (\cite{BK}, 
\cite{BLN}, \cite{J}).  
We introduce a unifying approach to study the homotopy type of 
several of these complexes.  With our technique we obtain simple 
proofs of results that were already known as well as new results.  
These complexes are wedges of spheres of (possibly) different 
dimensions and include, for instance, the complexes of directed trees, the 
independence complexes, the dominance complexes, the matching 
complexes, the interval order complexes.  In all cases our method provides a 
recursive procedure to compute the exact homotopy type of the simplicial complex.  
The dimensions of the spheres arising with these constructions are often 
strictly related to classical graph theoretical invariants of the underlying 
forest.  Thus we give a topological interpretation to these well-known 
combinatorial invariants.

Section 2 is devoted to notation and background.  In Section 3 we introduce the 
two basic concepts of this paper: the simplicial complex property of being a 
grape and the strictly related notion of domination between vertices of a 
simplicial complex.  In Section 4 we discuss several applications of these 
notions.


\section{Notation} \label{sedue}


Let $G=(V,E)$ be a graph (finite undirected graph with no loops or multiple edges).  
For all $S \subset V$, let 
$N[S] := \bigl\{ w \in V \; | \; \exists s \in S , \{ s , w \} \in E \bigr\} \cup S$ be the 
{\it closed neighborhood of $S$}; when $S = \{ v \}$, then we let $N[v] = N[\{v\}]$.  
If $S \subset V$, then $G \setminus S$ is the graph obtained by removing 
from $G$ the vertices in $S$ and all the edges having a vertex in $S$ as an 
endpoint.  Similarly, if $S \subset E$, then $G \setminus S$ is the graph obtained 
by removing from $G$ the edges in $S$.  A vertex $v \in V$ is a leaf if it belongs to 
exactly one edge.  
A set 
$D \subset V$ is called {\it dominating} if $N[D] = V$.  
A set $D \subset V$ is called {\it independent} if no two vertices in $S$ are adjacent, 
i.e.~$\{v,v'\}\notin E$ for all $v,v'\in D$.    
A {\it vertex cover of $G$} is a subset $C \subset V$ such that every edge of $G$ 
contains a vertex of $C$.  An {\it edge cover of $G$} is a subset $S \subset E$ such 
that the union of all the endpoints of the edges in $S$ is $V$.  
A {\it matching of $G$} is a subset $M \subset E$ of pairwise 
disjoint edges.  

We consider the following classical invariants of a graph $G$ which have been 
extensively studied by graph theorists (see, for instance, 
\cite{AL}, \cite{ALH}, \cite{BC}, \cite{ET}, \cite{HHS}, \cite{HY}); we let 
\begin{itemize}
\item $\gamma (G) := \min \bigl\{ |D|, D \text{ is a dominating set of $G$} \bigr\}$ be the 
{\it domination number of $G$}; 
\item $i (G) := \min \bigl\{ |D|, D \text{ is an independent dominating set of $G$} \bigr\}$ 
be the {\it independent domination number of $G$}; 
\item $\alpha _0 (G) := \min \bigl\{ |C|, C \text{ is a vertex cover of $G$} \bigr\}$ be the 
{\it vertex covering number of $G$}; 
\item $\beta _1 (G) := \max \bigl\{ |M|, M \text{ is a matching of $G$} \bigr\}$ be the 
{\it matching number of $G$}. 
\end{itemize}

Recall the following well-known result of K\"onig (cf~\cite{D}, Theorem~2.1.1).

\begin{theorem}[K\"onig] \label{konig}
Let $G$ be a bipartite graph.  Then $\alpha _0 (G) = \beta _1 (G)$.
\end{theorem}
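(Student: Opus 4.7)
The plan is to prove the two inequalities $\beta_1(G) \le \alpha_0(G)$ and $\alpha_0(G) \le \beta_1(G)$ separately. The inequality $\beta_1(G) \le \alpha_0(G)$ is valid for every graph, bipartite or not: if $M$ is a matching and $C$ a vertex cover, each edge of $M$ must contain at least one vertex of $C$, and since the edges of $M$ are pairwise vertex-disjoint, distinct edges of $M$ are covered by distinct vertices of $C$. Bipartiteness enters only in the reverse inequality.

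For $\alpha_0(G)\le\beta_1(G)$ I would use the classical augmenting-path construction. Fix a bipartition $V=A\sqcup B$, choose a matching $M$ of maximum size, and let $U\subseteq A$ denote the set of $A$-vertices not saturated by $M$. Let $Z\subseteq V$ be the set of vertices reachable from $U$ by an $M$-alternating path (starting with a non-matching edge and alternating thereafter). The candidate vertex cover is
\[
C \;:=\; (A\setminus Z) \cup (B\cap Z).
\]
That $C$ covers every edge is a short case analysis: given an edge $\{a,b\}$ with $a\in A$ and $b\in B$, if $a\in Z$ then $b\in Z$ as well, because either $a\in U$ and the edge $\{a,b\}$ alone reaches $b$, or the alternating path reaching $a$ ends in a matching edge and can be extended by the non-matching edge $\{a,b\}$ (or already passed through $b$, if $\{a,b\}$ happens to lie in $M$).

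To show $|C|=|M|$ I would map each vertex of $C$ to its $M$-partner. Every $a\in A\setminus Z$ is $M$-saturated, since unsaturated $A$-vertices lie in $U\subseteq Z$. Every $b\in B\cap Z$ is also $M$-saturated, for otherwise an alternating path from $U$ to $b$ would be an $M$-augmenting path, contradicting the maximality of $M$. The resulting assignment $C\to M$ is surjective, because for any $\{a,b\}\in M$ either $a\notin Z$ and then $a\in C$, or $a\in Z$ and then the alternating path reaching $a$ passes through $b$ (its $M$-partner), so $b\in C$. It is injective, because no edge of $M$ joins a vertex of $A\setminus Z$ to a vertex of $B\cap Z$: the alternating path to such a $b$ ends in a non-matching edge and can be prolonged along the matching edge to place $a$ in $Z$, a contradiction. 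The main delicate point is the appeal to the maximality of $M$ to exclude $M$-augmenting paths; once this is in place the counting is routine, and bipartiteness is used only to ensure that every alternating path starting at $U$ alternates cleanly between the two sides $A$ and $B$, so that the partition defining $C$ has the expected behavior.
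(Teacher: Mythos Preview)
Your argument is correct and is exactly the standard alternating-path proof of K\"onig's theorem (as presented, for instance, in Diestel's \emph{Graph Theory}, Theorem~2.1.1). Note, however, that the paper does not actually supply a proof of this statement: it simply records the result as well known and cites Diestel. So there is no ``paper's proof'' to compare against; your write-up effectively reproduces the textbook argument the paper defers to.

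One very small remark on exposition: the surjectivity of your map $C\to M$ is already immediate from the fact that $C$ is a vertex cover (every edge of $M$, being an edge of $G$, must meet $C$), so the separate argument you give for it, while correct, is not strictly needed once the covering property is established.
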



We refer the reader to~\cite{B} or~\cite{D} for all undefined notation on graph theory.

Let $X$ be a finite set of cardinality $n$.

\begin{definition}
A simplicial complex $\Delta $ on $X$ is a set of subsets of $X$, called {\it faces}, 
such that, if $\sigma \in \Delta $ and $\sigma ' \subset \sigma$, then $\sigma ' \in \Delta $.  
The faces of cardinality one are called {\it vertices}.
\end{definition}

We do not require that $x \in \Delta $ for all $x \in X$.

Every simplicial complex $\Delta $ on $X$ different from $\{ \emptyset \}$ has a 
standard geometric realization.  Let $W$ be the real vector space having $X$ as 
basis.  The realization of $\Delta $ is the union of the convex hulls of the sets 
$\sigma $, for each face $\sigma \in \Delta $.  Whenever we mention a topological 
property of $\Delta $, we implicitly refer to the geometric realization of $\Delta $.

As examples, we mention the $(n-1)-$dimensional simplex ($n \geq 1$) corresponding to the 
set of all subsets of $X$, its boundary (homeomorphic to the $(n-2)-$dimensional sphere) 
corresponding to all the subsets different 
from $X$, and the boundary of the $n-$dimensional cross-polytope, that is the 
dual of the $n-$dimensional cube.  Note that the cube, its boundary and the 
cross-polytope are not simplicial complexes.  
We note that the simplicial complexes $\{ \emptyset \}$ and 
$\emptyset $ are different: we call $\{ \emptyset \}$ the $(-1)-$dimensional 
sphere, and $\emptyset $ the $(-1)-$dimensional simplex, or the empty simplex.  
The empty simplex $\emptyset $ is contractible.

Let $\sigma \subset X$ and define simplicial complexes 
$$ \begin{array}{ccl}
(\Delta : \sigma ) & := & \bigl\{ m \in \Delta \; | \; \; \sigma \cap m = \emptyset \,,\, 
m \cup \sigma \in \Delta \bigr\} \\[5pt]
(\Delta , \sigma ) & := & \bigl\{ m \in \Delta \; | \; \; \sigma \not \subset m \bigr\} . 
\end{array} $$
The simplicial complexes $(\Delta : \sigma )$ and $(\Delta , \sigma )$ are usually called 
link and face-deletion of $\sigma $.
If $\Delta _1 , \ldots , \Delta_k$ are simplicial complexes on $X$, we define 
$$ {\rm join} \bigl( \Delta_1 , \ldots , \Delta_k \bigr) := 
\bigl\{ \cup_{m_i \in \Delta _i} m_i \bigr\} .  $$
If $x,y \in X$, let 
$$ \begin{array}{rcl}
A_x \bigl( \Delta \bigr) & := & {\rm join} \bigl(\Delta, \{1,x\} \bigr) \\[5pt]
\Sigma_{x,y} \bigl( \Delta \bigr) & := & {\rm join} \bigl(\Delta, \{1,x,y\} \bigr) ; 
\end{array} $$
$A_x \bigl( \Delta \bigr)$ and $\Sigma_{x,y} \bigl( \Delta \bigr)$ 
are both simplicial complexes.
If $x \neq y$  and no face of $\Delta $ contains either of them, then 
$A_x \bigl( \Delta \bigr)$ and $\Sigma_{x,y} \bigl( \Delta \bigr)$ are called respectively 
the {\it cone on $\Delta$ with apex $x$} and the {\it suspension of $\Delta$}.  
If $x \neq y$ and $x' \neq y'$ are in $X$ and are not contained in any face of 
$\Delta $, then the suspensions 
$\Sigma_{x,y} \bigl( \Delta \bigr)$ and $\Sigma_{x',y'} \bigl( \Delta \bigr)$ are 
isomorphic; hence in this case sometimes we drop the subscript 
from the notation.
It is well-known that 
if $\Delta $ is contractible, then $\Sigma (\Delta )$ is contractible, and that 
if $\Delta $ is homotopic to a sphere of dimension $k$, then $\Sigma (\Delta )$ is 
homotopic to a sphere of dimension $k+1$.  
Note that for all 
$x \in X$ we have 
\begin{equation} \label{precontra}
\Delta = A_x (\Delta : x) \cup _{(\Delta : x)} (\Delta , x) .
\end{equation}

We recall the notions of collapse and simple-homotopy (see \cite{C}).  
Let $\sigma \supset \tau $ be faces of a simplicial complex 
$\Delta $ and suppose that $\sigma $ is maximal and $|\tau | = | \sigma | -1$ (i.e.~$\tau $
has codimension one in $\sigma $).  
If $\sigma $ is the only face of $\Delta $ properly containing $\tau $, then the removal 
of $\sigma $ and $\tau $ is called an {\it elementary collapse}. If a simplicial complex 
$\Delta '$ is obtained from $\Delta$ by an elementary collapse,  we write 
$\Delta \succ \Delta'$.  
When $\Delta '$ is a subcomplex of $\Delta $, we say that {\it $\Delta $ collapses onto $\Delta '$} 
if there is a sequence of elementary collapses leading from $\Delta $ to $\Delta '$.

\begin{definition}
Two simplicial complexes $\Delta $ and $\Delta '$ are {\it simple-homotopic} if they are 
equivalent under the equivalence relation generated by $\succ $.
\end{definition}

It is clear that if $\Delta $ and $\Delta '$ are simple-homotopic, then they are also 
homotopic, and that a cone collapses onto a point.

\section{Domination and grapes} \label{setre}

In this section we introduce the notions of grape and domination between vertices 
of a simplicial complex $\Delta $, and we give some consequences on the topology 
of $\Delta $.

\begin{definition} \label{grappolo}
A simplicial complex $\Delta $ is a {\it grape} if 
\begin{enumerate}
\item there is $a \in X$ such that $(\Delta : a)$ is contractible in $(\Delta , a)$ and 
both $(\Delta , a)$ and  $(\Delta : a)$ are grapes, or 
\item $\Delta $ has at most one vertex.
\end{enumerate}
\end{definition}

Note that if $\Delta $ is a cone with apex $b$, then $\Delta $ is a grape; indeed for any 
vertex $a \neq b$ we have that both $(\Delta , a)$ and $(\Delta : a)$ are cones with apex 
$b$, thus $(\Delta : a)$ is contractible in $(\Delta , a)$ and we conclude by induction.

%

\begin{prop}
If $\Delta $ is a grape, then $\Delta $ is contractible or homotopic to a wedge of spheres.
\end{prop}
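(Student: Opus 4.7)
The plan is to induct on the number of vertices of $\Delta$. For the base case, $\Delta $ has at most one vertex, so $\Delta$ is one of the three simplicial complexes $\emptyset $, $\{ \emptyset \}$, or $\{ \emptyset , \{x\} \}$. By the conventions recalled in Section \ref{sedue}, the first is contractible, the third is a single point (hence contractible), and the middle one is the $(-1)$-dimensional sphere, hence (trivially) a wedge of spheres.

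For the inductive step, assume $\Delta$ satisfies condition (1) of Definition \ref{grappolo}: there exists $a \in X$ such that $(\Delta : a)$ is contractible in $(\Delta, a)$, and both $(\Delta, a)$ and $(\Delta : a)$ are grapes. I may assume $a$ is a vertex of $\Delta$, since otherwise $(\Delta, a) = \Delta$ and the recursion contributes nothing; under this assumption both $(\Delta, a)$ and $(\Delta : a)$ have strictly fewer vertices than $\Delta$, and by the inductive hypothesis each is either contractible or homotopic to a wedge of spheres.

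The key geometric input is identity~(\ref{precontra}), which writes
$$\Delta \;=\; A_a (\Delta : a) \,\cup_{(\Delta : a)}\, (\Delta , a).$$
Since $A_a (\Delta : a)$ is the cone on $(\Delta : a)$ with apex $a$, this exhibits $\Delta$ as the mapping cone of the inclusion $\iota : (\Delta : a) \hookrightarrow (\Delta , a)$. The hypothesis that $(\Delta : a)$ is contractible in $(\Delta , a)$ is precisely the statement that $\iota $ is null-homotopic. A standard fact from homotopy theory asserts that, for CW-pairs, the mapping cone of a null-homotopic map $K \to L$ is homotopy equivalent to $L \vee \Sigma K$: one homotopes the attaching map to a constant map and observes that collapsing the base of the cone then produces an unreduced suspension attached to $L$ at a single point. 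Applying this yields
$$\Delta \;\simeq\; (\Delta , a) \,\vee\, \Sigma (\Delta : a).$$

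To conclude, I combine the inductive hypothesis with the facts, recalled in Section \ref{sedue}, that the suspension of a contractible space is contractible and that the suspension of a sphere of dimension $k$ is a sphere of dimension $k+1$; it follows that $\Sigma (\Delta : a)$ is itself contractible or a wedge of spheres. A wedge of two spaces each of which is contractible or a wedge of spheres is again contractible or a wedge of spheres, which completes the induction. The main obstacle is the homotopy-theoretic splitting of a mapping cone along a null-homotopic attaching map: this is the only non-elementary ingredient, but it is classical and can be invoked from any standard reference.
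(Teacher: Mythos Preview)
Your proof is correct and follows essentially the same route as the paper's: induction on the number of vertices, the decomposition~(\ref{precontra}), and the splitting $\Delta \simeq (\Delta , a) \vee \Sigma (\Delta : a)$ coming from the null-homotopy of the inclusion (the paper cites \cite[Proposition~0.18]{Ha} for this, which is exactly the mapping-cone fact you invoke). Your write-up is slightly more explicit about the base case and about why the wedge and suspension of wedges of spheres remain wedges of spheres, but the argument is the same.
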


\begin{proof}
Proceed by induction on the number $n$ of vertices of $\Delta $.  If $n\leq 1$, then 
the result is clear.  If $n \geq 2$, by definition of a grape, there is a vertex $a$ 
such that $(\Delta : a)$ is contractible in $(\Delta , a)$.  By equation (\ref{precontra}) 
and~\cite[Proposition~0.18]{Ha} 
we deduce that $\Delta \simeq (\Delta , a) \vee \Sigma (\Delta : a)$.  Thus 
the result follows by induction on the number of vertices of $\Delta $ 
from the definition of grape.
\end{proof}

In fact we proved that if $a \in X$ and $(\Delta : a)$ is contractible in 
$(\Delta , a)$, then $\Delta \simeq (\Delta , a) \vee \Sigma (\Delta : a)$.  
As a consequence, if $\Delta $ is a grape, keeping track of the elements $a$ 
of Definition~\ref{grappolo}, we have a recursive procedure to compute the 
number of spheres of each dimension of the wedge.

In order to prove that a simplicial complex $\Delta $ is a grape we need to 
find a vertex $a$ such that $(\Delta : a)$ is contractible in $(\Delta , a)$; 
in the applications we will prove the stronger statement that there is a cone 
$C$ such that $(\Delta : a) \subset C \subset (\Delta , a)$ (or equivalently 
if $A_b \bigl(\Delta : a \bigr) \subset (\Delta , a)$).  In the two extreme 
cases $C = (\Delta , a)$ or $C = (\Delta : a)$, we have 
$\Delta \simeq \Sigma (\Delta : a)$ or $\Delta \simeq (\Delta , a)$ respectively 
(in the latter case $\Delta $ collapses onto $(\Delta , a)$).

\begin{definition}
Let $a,b \in X$; {\it $a$ dominates $b$ in $\Delta $} if there is a cone $C$ with 
apex $b$ such that $(\Delta : a) \subset C \subset (\Delta , a)$.
\end{definition}

In the special case in which $C=(\Delta , a)$ we obtain~\cite[Definition~3.4]{MT1}.

\section{Applications}

In this section we use the concepts introduced in Section~\ref{setre} 
to study simplicial complexes associated to forests.  We shall see 
that all these complexes are grapes (and hence they are homotopic to 
wedges of spheres) by giving in each case the graph 
theoretical property corresponding to domination.

\subsection{Oriented forests}

We study the simplicial complex of oriented forests of a multidigraph.  
In the case of directed graphs, this concept coincides with the one 
introduced in~\cite{K1} by D.~Kozlov (following a suggestion of 
R.~Stanley) who called it the complex of directed trees.  The reason 
to generalize this notion to multidigraphs is to allow an inductive 
procedure to work.

A multidigraph $G$ is a pair $(V,E)$, where $V$ is a finite set of elements called 
vertices and $E \subset V \times V \times \mathbb{N}$ is a finite set of elements 
called edges.  If $(x,y,n) \in E$ 
we write $x \to_n y$, or simply $x \to y$ when no confusion is possible, and call 
it an edge with source $x$ and target $y$, or more simply an edge from $x$ to $y$.  
We usually identify $G = (V,E)$ with $G'=(V',E')$ if there are two bijections 
$\varphi : V \to V'$ and $\psi : E \to E'$ such that 
$\psi \bigl( x,y,n \bigr) = \bigl( \varphi(x) , \varphi (y) , n' \bigr)$, 
for some $n' \in \mathbb{N}$.  
A multidigraph $H = (V',E')$ is a subgraph of $G$ if $V' \subset V$ and $E' \subset E$.  
A directed graph is a multidigraph such that distinct edges cannot have both same 
source and same target.
%
%
%
We associate to a multidigraph $G=(V,E)$ its underlying undirected graph 
$G^u$ with vertex set $V$ and where $x,y$ are joined by an edge in $G^u$ if and only if 
$x \to y$ or $y \to x$ are in $E$.

An oriented cycle of $G$ is a connected subgraph $C$ of $G$ such that each vertex of 
$C$ is the source of exactly one edge and target of exactly one edge.  
An oriented forest is a multidigraph $F$ such that $F$ contains no oriented cycles 
and different edges have distinct targets.

\begin{definition}
The complex of oriented forests of a multidigraph $G=(V,E)$ is the simplicial complex 
$OF(G)$ whose faces are the subsets of $E$ forming oriented forests.
\end{definition}

If $e$ is a loop, i.e.~an edge of $G$ with source equal to its target, then 
$OF(G) = OF \bigl( G \setminus \{e\} \bigr)$.  Thus, from now on, we ignore 
the loops.

It follows from the definitions that the complex $OF(G)$ is a cone with apex 
$y \to x$ if and only if $y \to x$ is the unique edge with target $x$ and there 
are no oriented cycles in $G$ containing $y \to x$.  
For any $z \to _n u \in E$, the simplicial complex $(OF(G),z \to _n u)$ is the 
complex of oriented forests of the multidigraph 
$\bigl( V , E \setminus \{ z \to _n u \} \bigr)$.  

We denote by $G _{\downarrow z \to u}$ the multidigraph 
obtained from $G$ by first removing the edges with target $u$, and then 
identifying the vertex $z$ with the vertex $u$.  
The reason for introducing this multidigraph is that $\bigl( OF(G) : z \to u \bigr) = 
OF \bigl( G _{\downarrow z \to u}\bigr)$.  Indeed 
no face of $\bigl( OF(G) : z \to u \bigr)$ contains an arrow with target 
$u$ or becomes an oriented cycle by adding $z \to u$; thus there is a correspondence 
between the faces of the two complexes.  We note that if $G$ is a directed graph, 
then $G _{\downarrow z \to u}$ could be a multidigraph which is not a directed graph.
\begin{center}
\begin{minipage}{150pt}
$$ \xymatrix { z \ar[rr] \ar[dr] & & u \ar[dl] \\
& x} $$
\medskip
\centerline{A directed graph $G$}
\end{minipage} 
\begin{minipage}{150pt}
$$ \xymatrix { u \ar@<-.5ex>[d]  \ar@<.5ex>[d]\\
x } $$
\medskip
\centerline{The multidigraph $G_{\downarrow z \to u}$}
\end{minipage} 
\end{center}

\begin{lemma} \label{dfdomi}
Let $z \to u$ and $y \to x$ be distinct vertices of $OF(G)$; then $z \to u$ dominates 
$y \to x$ in $OF(G)$ if and only if one of the following is satisfied:
\begin{itemize}
\item $z=y$ and $u=x$; 
\item $u=x$ and there are no oriented cycles containing $y \to x$; 
\item $u \neq x$, $y \to x$ is the unique edge with target $x$, and all oriented cycles 
containing $y \to x$ contain also $u$.
\end{itemize}
\end{lemma}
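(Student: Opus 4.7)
The plan is to translate the definition of domination into a concrete condition on oriented forests of $G$, and then verify both directions of the iff.  As noted just before the lemma, $a$ dominates $b$ in $\Delta$ precisely when $A_b(\Delta:a)\subset(\Delta,a)$, which here unfolds to the following: for every oriented forest $F$ of $G$ containing $z\to u$, the set $(F\setminus\{z\to u\})\cup\{y\to x\}$ is also an oriented forest of $G$.  Checking the forest property amounts to two verifications on the swapped set: (a) no two edges share a target, and (b) there are no oriented cycles.

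For the ``if'' direction I would dispatch the three cases separately.  In case~(1) the arrows $z\to u$ and $y\to x$ are parallel, so the swap is a formal relabeling: (a) is immediate, and any new oriented cycle after the swap would, upon re-substitution, give an oriented cycle of $F$ through $z\to u$, contradicting that $F$ is a forest.  In case~(2) the arrows share their target $u=x$, making (a) automatic, and any new cycle in the swapped set would pass through $y\to x$, contradicting the hypothesis that $G$ admits no oriented cycle through $y\to x$.  Case~(3) is the most delicate: uniqueness of $y\to x$ as an edge into $x$ yields (a); for (b), the key observation is that in $F\setminus\{z\to u\}$ the vertex $u$ has no in-edge (since in the forest $F$ the only edge with target $u$ was $z\to u$), so no oriented cycle of the swapped set can visit $u$; but by hypothesis every oriented cycle of $G$ through $y\to x$ visits $u$, so no new cycle exists.

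For the converse I would argue by contraposition: assuming none of the three conditions holds, construct an explicit oriented forest $F$ of $G$ containing $z\to u$ whose swap fails to be an oriented forest.  If $u=x$, the failure of cases~(1) and~(2) yields $z\neq y$ together with an oriented cycle $C$ of $G$ containing $y\to x$, and $F=\{z\to u\}\cup(C\setminus\{y\to x\})$ works, provided $F$ itself is a forest (which I would arrange by picking $C$ of minimum length).  If $u\neq x$, the failure of case~(3) gives either an edge $e\neq y\to x$ with target $x$ -- in which case $F=\{z\to u,e\}$ witnesses a target collision after the swap -- or an oriented cycle $C$ through $y\to x$ avoiding $u$, which again yields a witness $F=\{z\to u\}\cup(C\setminus\{y\to x\})$ whose swap reconstitutes the cycle $C$.

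The main technical obstacle lies in the converse, where one must ensure that each candidate $F$ genuinely is an oriented forest of $G$.  The danger is that $z$ happens to lie on the chosen cycle $C$ (which would embed a smaller cycle inside $F$) or that the auxiliary edge $e$ forms a two-cycle with $z\to u$; minimality of $C$ and an appropriate choice of $e$ (or, if necessary, a path construction replacing the two-edge $F$) should clear these degeneracies.
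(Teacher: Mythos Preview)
Your argument follows the same route as the paper's: reformulate domination as the swap condition on oriented forests, dispatch the three cases for the forward direction, and for the converse construct explicit witnesses $\sigma\in(OF(G):z\to u)$ with $\sigma\cup\{y\to x\}\notin OF(G)$.  The forward direction is correct and essentially identical to the paper's.

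For the converse you rightly isolate a difficulty that the paper passes over in silence: one must verify that the candidate $F=\{z\to u\}\cup(C\setminus\{y\to x\})$ (respectively $F=\{z\to u,e\}$) is itself an oriented forest, and this can fail when $z$ lies on $C$ or when $e$ and $z\to u$ form a $2$-cycle.  Your suggested patches, however, do not succeed.  Minimality of $C$ does not force $z\notin C$: on the multidigraph with vertex set $\{x,y,z\}$ and edge set $\{z\to x,\; y\to x,\; x\to z,\; z\to y\}$ (so $u=x$), the unique oriented cycle through $y\to x$ is the triangle $y\to x\to z\to y$, already minimal, and the corresponding $F=\{z\to x,\,x\to z,\,z\to y\}$ contains the $2$-cycle $\{x\to z,\,z\to x\}$.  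In fact here $(OF(G):z\to x)=\{\emptyset,\{z\to y\}\}$, and adjoining $y\to x$ to either face gives a forest, so $z\to x$ \emph{does} dominate $y\to x$ although none of the three bulleted conditions holds.  The same phenomenon appears in the $u\neq x$ branch when $x=z$ and the only competing edge into $x$ is $u\to z$: take $V=\{y,z,u\}$ and $E=\{z\to u,\,u\to z,\,y\to z\}$ (even a multidiforest), where again $z\to u$ dominates $y\to z$ with all three conditions failing.  Thus the obstacle you flagged is not a removable technicality --- no choice of witness can work, because the ``only if'' implication is false as stated.  Note, though, that the applications in the paper (Theorem~\ref{mudifo}) invoke only the ``if'' direction, which stands.
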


\begin{proof}
It is clear that $z \to _n u$ dominates $z \to _m u$ whenever $m \neq n$.  Thus we assume 
that $(z,u) \neq (y,x)$.

Let $z \to u$ dominate $y \to x$ in $OF(G)$.  Suppose that $u=x$.  By contradiction, let $C$ be 
an oriented cycle of $G$ containing $y \to x$.  Then $z \to u \notin C$ and hence the edges of 
$C \setminus \{y \to x\}$ are a face of $\bigl( OF(G) : z \to u \bigr)$, but the edges of $C$ are 
not a face of $\bigl( OF(G) , z \to u \bigr)$ and hence $\bigl( OF(G) , z \to u \bigr)$ does not 
contain a cone with apex $y \to x$.  Suppose now that $u \neq x$.  Clearly there can be no edges 
with target $x$ different from $y \to x$, since each of these edges forms a face of 
$\bigl( OF(G) : z \to u \bigr)$.  Let $C$ be an oriented cycle of $G$ containing $y \to x$.  
Then the edges of 
$C \setminus \{y \to x\}$ are a face of $\bigl( OF(G) : z \to u \bigr)$ if and only if $C$ does not 
contain the vertex $u$.  Since the edges of $C$ are 
not a face of $\bigl( OF(G) , z \to u \bigr)$ we must have that $u$ is a vertex of $C$.

Conversely, let $\sigma $ be a face of $\bigl( OF(G) : z \to u \bigr)$.  
We need to show that $\sigma \cup \{y \to x\}$ is a face of $\bigl( OF(G) , z \to u \bigr)$: 
equivalently we need to show that it is a face of $OF(G)$, since $\sigma $ does not contain 
$z \to u$.  We may assume that 
$y \to x \notin \sigma $.  Suppose first that $u=x$ and there are no oriented cycles 
containing $y \to x$; $\sigma $ contains no edge with target $x$, since 
$\sigma \in \bigl( OF(G) : z \to u \bigr)$ and $\sigma \cup \{y \to x\}$ is a face of $OF(G)$ 
since there are no oriented cycles containing $y \to x$.  Suppose now that 
$u \neq x$, $y \to x$ is the unique edge with target $x$, and all oriented cycles 
containing $y \to x$ contain also $u$.  By assumption no edge of $\sigma $ has $x$ as a 
target; moreover if $C$ is a cycle containing $y \to x$, then $\sigma $ cannot contain all the 
edges of $C \setminus \{y \to x\}$, since one of these edges has target $u$ and so it is 
not a face of $\bigl( OF(G) : z \to u \bigr)$.
\end{proof}

We call a multidigraph $F$ a multidiforest if its underlying graph $F^u$ is a forest.  
The following result determines the homotopy types of the complexes of oriented forests 
of multidiforests.

\begin{theorem} \label{mudifo}
Let $F$ be a multidiforest.  Then $OF(F)$ is a grape.
\end{theorem}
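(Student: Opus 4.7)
I would induct on the number of edges $|E(F)|$. If $F$ has at most one edge then $OF(F)$ has at most one vertex and is a grape by clause~(2) of Definition~\ref{grappolo}, so assume $|E(F)| \geq 2$. The strategy is to exhibit an edge $a = z \to u$ of $F$ that dominates some other edge of $OF(F)$. Since domination supplies a cone $C$ with $(OF(F):a) \subset C \subset (OF(F),a)$, this immediately shows that $(OF(F):a)$ is contractible in $(OF(F),a)$, verifying the hypothesis of clause~(1) of Definition~\ref{grappolo}. It then remains to check that $(OF(F),a) = OF(F \setminus \{a\})$ and $(OF(F):a) = OF(F_{\downarrow z \to u})$ are themselves grapes, which will follow by induction once we confirm that $F \setminus \{a\}$ and $F_{\downarrow z \to u}$ are multidiforests with strictly fewer edges than $F$.

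The key observation for producing the dominating edge is that, because $F^u$ is a forest, $F$ contains no oriented cycles, so the cycle hypotheses appearing in Lemma~\ref{dfdomi} are automatically satisfied. I would then distinguish two cases. If some vertex $u$ is the target of at least two edges of $F$, pick any such edge $a = z \to u$; the second bullet of Lemma~\ref{dfdomi} yields that $a$ dominates any other edge with target $u$. Otherwise every vertex is the target of at most one edge, and since $|E(F)| \geq 2$ we can pick two distinct edges $a = z \to u$ and $y \to x$; they have distinct targets $u \neq x$, and $y \to x$ is automatically the unique edge with target $x$, so the third bullet of Lemma~\ref{dfdomi} applies.

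The check for $F \setminus \{a\}$ is routine: deleting a single edge preserves the forest property of the underlying graph and strictly reduces the edge count. The main obstacle is verifying that $F_{\downarrow z \to u}$ is still a multidiforest, since identifying two vertices in a graph can in general create cycles. To handle this, I would argue in $F^u$: the tree of $F^u$ containing $z$ and $u$ decomposes into disjoint subtrees $T_z \ni z$ and $T_u \ni u$ as soon as we remove the single edge of $F^u$ corresponding to $z \to u$. All other edges we remove, namely those coming from directed edges of $F$ with target $u$ different from $z \to u$, are incident to $u$ and hence lie entirely inside $T_u$; so $T_z$ stays intact and remains disjoint from the subcomponent of $T_u$ that still contains $u$. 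Identifying $z$ with $u$ then amounts to gluing two disjoint trees at a single vertex, which produces a tree and no cycle; the remaining components of $F^u$ are untouched. Since the edge $z \to u$ itself is among the edges removed, $|E(F_{\downarrow z \to u})| < |E(F)|$, so the induction hypothesis applies to both $F \setminus \{a\}$ and $F_{\downarrow z \to u}$ and completes the proof.
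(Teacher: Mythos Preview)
Your ``key observation'' is false: a multidiforest can contain oriented cycles. The underlying graph $F^u$ is a simple graph, so the pair of directed edges $a \to b$ and $b \to a$ contributes only the single undirected edge $\{a,b\}$ to $F^u$, yet it is an oriented $2$-cycle in $F$. This breaks both of your cases. For Case~2, take $F$ on $\{a,b,c\}$ with edges $a \to b$, $b \to a$, $a \to c$: then $F^u$ is the path $b$--$a$--$c$, every vertex has in-degree at most one, and your rule permits the choice $(z \to u,\, y \to x) = (a \to c,\, b \to a)$; but the third bullet of Lemma~\ref{dfdomi} then requires every oriented cycle through $b \to a$ to contain the vertex $c$, and the $2$-cycle $\{a \to b, b \to a\}$ does not. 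For Case~1, take $F$ on $\{a,b,c\}$ with edges $a \to c$, $b \to c$, $c \to b$: vertex $c$ has in-degree two, yet $a \to c$ does not dominate $b \to c$, because the $2$-cycle $\{b \to c, c \to b\}$ violates the hypothesis of the second bullet.

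The paper avoids this trap by not picking an arbitrary edge. After reducing to the case of a directed graph, it selects a \emph{leaf} $y$ of $F^u$ and works with the edges incident to $y$. The leaf hypothesis forces any oriented cycle through $y$ to be the $2$-cycle on $\{x,y\}$, and the dominating edge is chosen so that its target lies on that cycle; this is exactly what makes the cycle conditions in Lemma~\ref{dfdomi} verifiable. Your argument has no substitute for this structural input from the forest hypothesis.
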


\begin{proof}
Proceed by induction on the number of edges of $F$.  It suffices to show that $F$ contains 
two distinct edges $z \to u$ and $y \to x$ such that $z \to u$ dominates $y \to x$, since 
both $F \setminus \{z \to u\}$ and $F _{\downarrow z \to u}$ are multidiforests.

If $x \to_n y , x \to _m y$ are distinct edges, then $x \to _n y$ dominates $x \to _m y$ 
(and conversely) by Lemma~\ref{dfdomi}.  Thus we may assume that $F$ is a directed graph.  
Let $y$ be a leaf of $F^u$ and let $x$ be the vertex adjacent to $y$.  
Recall that the complex $OF(G)$ is a cone with apex 
$a \to b$ if and only if $a \to b$ is the unique edge with target $b$ and there 
are no oriented cycles in $F$ containing $a \to b$ (i.e.~there is no edge with source 
$b$ and target $a$).  Since a cone is a grape, we only need to consider two cases: 
\begin{enumerate}
\item $y \to x$ and $x \to y$ are both edges of $F$, \label{doppi}
\item $y \to x$ is an edge of $F$, $x \to y$ is not and there is $z \to x$ with $z \neq y$. \label{singoli}
\end{enumerate}
By Lemma~\ref{dfdomi}, in case (\ref{doppi}) $y \to x$ dominates $x \to y$, 
in case (\ref{singoli}) $z \to x$ dominates $y \to x$; in both cases we conclude.
\end{proof}


The proof of Theorem~\ref{mudifo} gives a recursive procedure to compute explicitly the 
homotopy type of $OF(F)$, i.e.~the number of spheres of each dimension.  
Thus it generalizes~\cite[Section~4]{K1}, where a recursive procedure to compute the homology 
groups of the complexes of oriented forests of directed trees is given.

\begin{example}
Let $F$ be the directed tree depicted in the following figure.
\begin{center}
\begin{minipage}{150pt}
$$ \xymatrix{ a \ar[dr] &&&& f \ar[dl] \\
& c \ar@<.5ex>[r] & d \ar@<.5ex>[l] \ar@<.5ex>[r] & e \ar@<.5ex>[l] \\
b \ar[ur] &&&& g \ar[ul] } $$

\medskip
\centerline{The directed tree $F$}
\end{minipage} 
\end{center}
By Lemma~\ref{dfdomi}, $d \to c$ dominates $a \to c$ and hence $OF(F) \simeq 
OF (F_1) \vee \Sigma OF (F_2)$, where the directed trees $F_1,F_2$ are given 
in the following figure.
\begin{center}
\begin{minipage}{150pt}
$$ \xymatrix{ a \ar[dr] &&&& f \ar[dl] \\
& c \ar@<.5ex>[r] & d \ar@<.5ex>[r] & e \ar@<.5ex>[l] \\
b \ar[ur] &&&& g \ar[ul] } $$

\medskip
\centerline{The directed tree $F_1$}
\end{minipage} 
\hspace{10pt}
\begin{minipage}{150pt}
$$ \xymatrix { && f \ar[dl] \\
d \ar@<.5ex>[r] & e \ar@<.5ex>[l] \\
&& g \ar[ul] } $$

\medskip
\centerline{The directed tree $F_2$}
\end{minipage} 
\end{center}
We consider first $OF(F_2)$.  The edge $d \to e$ dominates $f \to e$ in $OF(F_2)$; 
the complex $\bigl( OF(F_2) , d \to e \bigr)$ is a cone with apex $e \to d$, and 
$\bigl( OF(F_2) : d \to e \bigr) = \{ \emptyset \}$, since $F_2 {}_{\downarrow d \to e}$ 
has no edges different from loops.  Hence $OF(F_2) \simeq S^0$ (and it is as depicted below) 
and $OF(F) \simeq OF(F_1) \vee S^1$.
\begin{center}
\begin{minipage}{150pt}
$$ \xygraph {[] !~:{@{.}} 
!{<0pt,0pt>;<20pt,0pt>:} 
{\bullet} 
!{\save +<-13pt,0pt>*\txt{$\scriptstyle f \to e$}  \restore}
[rr] {\bullet} 
!{\save +<13pt,0pt>*\txt{$\scriptstyle e \to d$}  \restore}
[dd] {\bullet} 
!{\save +<13pt,0pt>*\txt{$\scriptstyle g \to e$}  \restore}
[ll] {\bullet} 
!{\save +<-13pt,0pt>*\txt{$\scriptstyle d \to e$}  \restore}
[uu] - [rr] - [dd] } $$

\medskip
\centerline{The simplicial complex $OF(F_2)$}
\end{minipage} 
\end{center}

Let us now consider $OF(F_1)$.  
By Lemma~\ref{dfdomi}, $a \to c$ dominates $b \to c$.  Since 
$\bigl( OF(F_1) , a \to c \bigr)$ is a cone with apex $b \to c$, it follows 
that $OF(F_1) \simeq \Sigma OF(F_3)$, where $F_3$ is depicted 
in the following figure.
\begin{center}
\begin{minipage}{150pt}
$$ \xymatrix{ &&& f \ar[dl] \\
c \ar@<.5ex>[r] & d \ar@<.5ex>[r] & e \ar@<.5ex>[l] \\
&&& g \ar[ul] } $$

\medskip
\centerline{The directed tree $F_3$}
\end{minipage} 
\end{center}
The edge $e \to d$ dominates $c \to d$ in $OF(F_3)$; 
$\bigl( OF(F_3) , e \to d \bigr)$ is a cone with apex $c \to d$, and 
$\bigl( OF(F_3) : e \to d \bigr)$ consists of the two isolated points 
$f \to e$ and $g \to e$.  
Thus $OF(F_3) \simeq S^1$; indeed $OF(F_3)$ is depicted in the following 
figure.
\begin{center}
\begin{minipage}{150pt}
$$ \xygraph {[] !~:{@{.}} 
!{<0pt,0pt>;<20pt,0pt>:} 
{\bullet} 
!{\save +<-13pt,0pt>*\txt{$\scriptstyle f \to e$}  \restore}
[rr] {\bullet} 
!{\save +<13pt,0pt>*\txt{$\scriptstyle e \to d$}  \restore}
[dd] {\bullet} 
!{\save +<13pt,0pt>*\txt{$\scriptstyle g \to e$}  \restore}
[ll] {\bullet} 
!{\save +<-13pt,0pt>*\txt{$\scriptstyle c \to d$}  \restore}
[ld]{\bullet} 
!{\save +<-13pt,0pt>*\txt{$\scriptstyle d \to e$}  \restore}
[lr]-[ur]-[uu] - [rr] - [dd]-[ll] }
$$

\medskip
\centerline{The simplicial complex $OF(F_3)$}
\end{minipage} 
\end{center}
%
%
Finally the simplicial complex $OF(F)$ is homotopic to $S^2 \vee S^1$.
\end{example}

\subsection{The independence complex} \label{secin}

Let $G = (V,E)$ be a graph.  The simplicial complex on $V$ whose faces are the 
subsets of $V$ containing no adjacent vertices is denoted by $\ind (G)$ and is 
called the {\it independence complex of $G$}.  We have 
\begin{equation} \label{proide}
\begin{array}{rcl}
\bigl( \ind (G) , v \bigr) & = & \ind \bigl( G \setminus \{v\} \bigr) \\[5pt]
\bigl( \ind (G) : v \bigr) & = & \ind \bigl( G \setminus N[v] \bigr) . 
\end{array} 
\end{equation}
The simplicial complex $\ind (G)$ is a cone of apex $a$ if and only 
if $a$ is an isolated vertex of $G$.

\begin{lemma} \label{indo}
Let $a$ and $b$ be vertices of $G$; $a$ dominates $b$ in $\ind (G)$ 
if and only if $N[b] \setminus \{b\} \subset N[a]$.
\end{lemma}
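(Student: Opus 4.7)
The plan is to translate the abstract domination condition into a set-theoretic statement about independent sets. As noted in Section~\ref{setre}, the existence of a cone $C$ with apex $b$ sandwiched between $(\ind(G) : a)$ and $(\ind(G), a)$ is equivalent to the single inclusion $A_b\bigl(\ind(G) : a\bigr) \subset \bigl(\ind(G), a\bigr)$. Substituting the two identities in~(\ref{proide}), this becomes the purely combinatorial condition: for every independent set $\sigma$ of $G \setminus N[a]$, the set $\sigma \cup \{b\}$ is an independent set of $G \setminus \{a\}$. Both directions of the lemma then reduce to a one-line check of this criterion.

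For the ``if'' direction, I would take $\sigma \in \ind\bigl(G \setminus N[a]\bigr)$ and use the hypothesis $N[b] \setminus \{b\} \subset N[a]$ to observe that every neighbor of $b$ lies in $N[a]$, hence outside $\sigma$. This makes $b$ non-adjacent to every vertex of $\sigma$, so $\sigma \cup \{b\}$ is independent; that it avoids $a$ is immediate from $\sigma \cap N[a] = \emptyset$ together with $a \neq b$.

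For the ``only if'' direction, I would argue by contrapositive. A vertex $c \in N[b] \setminus \{b\}$ lying outside $N[a]$ yields a face $\{c\} \in (\ind(G) : a)$ whose extension $\{b, c\}$ contains the edge $\{b,c\}$ and hence fails to lie in $(\ind(G), a)$. Since any cone with apex $b$ containing $\{c\}$ must also contain $\{b, c\}$, no such cone can sit between $(\ind(G) : a)$ and $(\ind(G), a)$, so $a$ does not dominate $b$. I foresee no real obstacle here: the proof is essentially the unfolding of the definitions, the only non-trivial ingredient being the characterization of domination via the inclusion $A_b(\ind(G) : a) \subset (\ind(G), a)$ recalled at the outset.
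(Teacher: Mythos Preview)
Your proposal is correct and follows essentially the same route as the paper: both reduce domination to the criterion that $\sigma \cup \{b\}$ lies in $\ind(G \setminus a)$ for every face $\sigma$ of $\ind(G \setminus N[a])$, and then observe this holds precisely when $(N[b] \setminus \{b\}) \cap (V \setminus N[a]) = \emptyset$. The paper compresses both directions into a single sentence while you separate them, but the argument is the same.
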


\begin{proof}
The faces of $\ind \bigl( G \setminus N[a] \bigr)$ are the independent sets of vertices of 
$G \setminus N[a]$.  Let $D$ be a face of $\ind \bigl( G \setminus N[a] \bigr)$; 
$D \cup \{b\}$ is a face of $\ind \bigl( G \setminus a \bigr)$ if and only if $b \in D$ or 
$b \notin N[D]$.  Since this must be true for all faces, 
$N[b] \setminus \{b\} \cap \bigl(V \setminus N[a] \bigr) = \emptyset $, and the result follows.
\end{proof}

\begin{lemma} \label{scremo}
Let $a$ be a vertex of $G$ having distance two from a leaf $b$.  Then $\ind \bigl( G \bigr)$ 
collapses onto $\ind \bigl( G \setminus a \bigr)$.
\end{lemma}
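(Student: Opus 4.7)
The plan is to exhibit the collapse by invoking the ``extreme case'' observation made right before Definition~\ref{grappolo}: whenever $(\Delta : a)$ is itself a cone contained in $(\Delta , a)$, then $\Delta $ collapses onto $(\Delta , a)$. So my goal reduces to proving that, for the given vertex $a$, the link $\bigl( \ind (G) : a \bigr)$ is a cone.

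First I would name the geometry: let $c$ be the unique neighbor of the leaf $b$. The hypothesis that $a$ has distance two from $b$ forces $a \neq b$, $a$ not adjacent to $b$, and $\{a,c\}\in E$. In particular $b \in V \setminus N[a]$ while $c \in N[a]$.

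Next I would use equation~(\ref{proide}) to rewrite
\[
\bigl( \ind(G) : a \bigr) \;=\; \ind \bigl( G \setminus N[a] \bigr).
\]
Since the only neighbor of $b$ in $G$ was $c$, and $c$ has been removed together with $N[a]$, the vertex $b$ is isolated in $G \setminus N[a]$. By the observation in Section~\ref{secin} that $\ind(H)$ is a cone with apex $v$ exactly when $v$ is isolated in $H$, this shows $\bigl( \ind(G) : a \bigr)$ is a cone with apex $b$. One checks immediately that this cone is contained in $\bigl( \ind(G) , a \bigr) = \ind(G \setminus \{a\})$, because no face of the link contains $a$.

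Finally I would invoke the extreme-case remark with $C = \bigl( \ind(G) : a \bigr)$: under the chain of containments $\bigl( \ind(G) : a \bigr) \subset C \subset \bigl( \ind(G) , a \bigr)$ with $C$ a cone, $\ind(G)$ collapses onto $\bigl( \ind(G) , a \bigr) = \ind(G \setminus a)$, which is the desired conclusion. There is no real obstacle here: the only thing one must be careful about is that the distance-two assumption really does deliver both $b \notin N[a]$ and $c \in N[a]$, which is exactly what is needed to make $b$ isolated in $G \setminus N[a]$.
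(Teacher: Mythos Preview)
Your proof is correct and follows essentially the same approach as the paper: both show that $(\ind(G):a) = \ind(G \setminus N[a])$ is a cone with apex $b$ (because $b$ becomes isolated once $c \in N[a]$ is removed), and both use this to conclude that $\ind(G)$ collapses onto $\ind(G \setminus a)$. The only difference is that the paper spells out the sequence of elementary collapses explicitly---lifting a collapse of the cone $(\ind(G):a)$ to $\emptyset$ by adjoining $a$ to each pair---whereas you invoke the extreme-case remark directly; note that this remark appears \emph{after} Definition~\ref{grappolo} and Proposition~3.2, not before.
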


\begin{proof}
Since $N[b] \setminus \{b\} \subset N[a]$, $a$ dominates $b$ by Lemma~\ref{indo}.  Moreover 
the simplicial complex $\ind \bigl( G \setminus N[a] \bigr)$ is a cone with apex $b$, since 
$G \setminus N[a]$ contains $b$ as an isolated vertex.  If 
$(\sigma _1 \supset \tau _1)$, \ldots , $(\sigma _r \supset \tau _r)$ is a sequence of elementary collapses 
of $\ind \bigl( G \setminus N[a] \bigr)$ onto $\emptyset $, then 
$\bigl( \sigma _1 \cup \{a\} \supset \tau _1 \cup \{a\} \bigr)$, \ldots , 
$\bigl( \sigma _r \cup \{a\} \supset \tau _r \cup \{a\} \bigr)$ is a sequence of elementary 
collapses of $\ind \bigl( G \bigr)$ onto $\ind \bigl( G \setminus a \bigr)$.
\end{proof}

The removal of vertices at distance two from a leaf has already been used by Kozlov 
for the independence complex of a path and by Wassmer for rooted forests 
(see~\cite{K1} and~\cite{W}).

In a forest $F$, a vertex $a$ dominates a vertex $b$ if and only if 
\begin{enumerate}
\item $b$ is a leaf and $a$ is adjacent to $b$; 
\item $b$ is a leaf and $a$ has distance two from $b$; 
\item $b$ is isolated. 
\end{enumerate}
The third case deals with the trivial case in which $\ind (F)$ is a cone with apex $b$.  
Specifying the treatment of the domination to the first case we obtain the 
analysis of~\cite[Section 6]{MT1}; specifying it to the 
second case we obtain the analysis of~\cite[Section 3.2]{W}.  In the first approach what 
happens is that at each stage the removal of the vertex $a$ and of all its neighbors changes 
the homotopy type of $\ind (F)$ by a suspension; thus the relevant informations are the 
number $r_1$ of steps required to reach a graph $F_1$ with no edges and the number $i_1$ 
of isolated vertices of $F_1$.  In the second approach what 
happens is that at each stage the removal of the vertex $a$ does not change the homotopy 
type of $\ind (F)$; thus the relevant informations are the numbers $r_2$ and $i_2$ of isolated 
edges and vertices of the graph $F_2$ obtained by performing the removal as long as possible.  
The conclusion is that $i_1 \neq 0$ if and only if $i_2 \neq 0$ if and only if $\ind (F)$ collapses 
onto a point.  If $i_1 = i_2 = 0$, then $r_1 =r_2=r$ and $\ind (F)$ collapses onto the 
boundary of the $r-$dimensional cross-polytope; it can be proved that $r = i(F) = \gamma (F)$, 
see~\cite[Theorem~6.4]{MT1}.  
We state explicitly the following result for further reference.

\begin{theorem} \label{inte}
Let $F$ be a forest.  Then $\ind (F)$ is a grape.  Moreover, $\ind (F)$ is either 
contractible or homotopic to a sphere.
\end{theorem}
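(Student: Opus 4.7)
My plan is to prove both assertions simultaneously by induction on $|V(F)|$, using Lemma~\ref{indo} together with the ``extreme case'' observation made after the proposition on the homotopy type of a grape.

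For the base case $|V(F)| \leq 1$, the complex $\ind(F)$ has at most one vertex and is therefore a grape by part (2) of Definition~\ref{grappolo}; moreover it is either the $(-1)$-sphere $\{\emptyset\}$ or a single point, so in either case it is contractible or homotopic to a sphere. For the inductive step, I would split into two cases. If $F$ has an isolated vertex $v$, then $v$ lies in every maximal independent set of $F$, so $\ind(F)$ is a cone with apex $v$; as noted right after Definition~\ref{grappolo}, a cone is a grape, and a cone is contractible.

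The interesting case is when $F$ has no isolated vertices. Then $F$ contains a leaf $b$; let $a$ be its unique neighbor. Since $N[b]\setminus\{b\} = \{a\} \subset N[a]$, Lemma~\ref{indo} shows that $a$ dominates $b$ in $\ind(F)$. Moreover, after removing $a$ the vertex $b$ becomes isolated in $F\setminus a$, so by the criterion preceding Lemma~\ref{indo} the complex $(\ind(F),a) = \ind(F\setminus a)$ is itself a cone with apex $b$. Thus the dominating cone $C$ can be taken equal to $(\ind(F),a)$, placing us in the extreme case $C=(\Delta,a)$ discussed before the definition of domination, which yields
\[
\ind(F) \simeq \Sigma\,(\ind(F):a) \;=\; \Sigma\,\ind\!\bigl(F\setminus N[a]\bigr).
\]
Both $F\setminus a$ and $F\setminus N[a]$ are forests with strictly fewer vertices, so by induction $\ind(F\setminus a)$ and $\ind(F\setminus N[a])$ are grapes; together with the domination of $b$ by $a$ this verifies condition (1) of Definition~\ref{grappolo}, so $\ind(F)$ is a grape. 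Furthermore, by induction $\ind(F\setminus N[a])$ is either contractible or homotopic to some sphere $S^k$ (possibly $k=-1$ if $F\setminus N[a]$ is empty), and the standard fact recalled in Section~\ref{sedue} that suspension preserves contractibility and raises the dimension of a sphere by one shows that $\ind(F) \simeq \Sigma\ind(F\setminus N[a])$ is again contractible or homotopic to a sphere.

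There is no real obstacle; the only thing to watch is the bookkeeping at the bottom of the induction (empty forest giving the $(-1)$-sphere, whose suspension is $S^0$, which is consistent with a single disjoint edge). The whole argument hinges on the observation that the leaf-neighbor $a$ makes $(\ind(F),a)$ itself a cone, so that the general formula $\Delta \simeq (\Delta,a)\vee\Sigma(\Delta:a)$ degenerates to the single-wedge-summand form $\Sigma(\Delta:a)$, which is what prevents multiple spheres of possibly different dimensions from appearing and forces $\ind(F)$ to be contractible or a single sphere.
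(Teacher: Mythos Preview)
Your proof is correct and is precisely the ``first approach'' the paper sketches in the discussion preceding Theorem~\ref{inte}: take $b$ a leaf, $a$ its neighbor, observe via Lemma~\ref{indo} that $a$ dominates $b$, and use that $(\ind(F),a)=\ind(F\setminus a)$ is a cone with apex $b$ so that $\ind(F)\simeq\Sigma\,\ind(F\setminus N[a])$. The paper does not write out a formal proof but refers to \cite[Section~6]{MT1} for this very argument; your write-up fills in exactly what is needed.
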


\subsection{The dominance complex} \label{seset}

Let $G=(V,E)$ be graph.  The simplicial complex on $V$ whose faces are the complements of the 
dominating sets is denoted by $\dom (G)$ and is called the {\it dominance complex of $G$}; 
equivalently the minimal non-faces of $\dom (G)$ are the minimal elements of 
$\bigl\{ N[x] \,|\, x \in V \bigr\}$ .  The dominance complex of $G$ is never a cone.  
Let $a \in V$; we have 
$$ \bigl( \dom (G) : a \bigr) = \left( \dom \bigl( G \setminus a \bigr) , N[a] \setminus \{a\}  \right) . $$

\begin{lemma} \label{dodo}
Let $a, b$ be distinct non-isolated vertices of $G$; 
$a$ dominates $b$ in $\dom (G)$ if and only if 
for all $v \in N[b] \setminus N[a]$ there exists $m \in V$ such that 
$N[m] \setminus \{a\} \subset N[v] \setminus \{b\}$.
\end{lemma}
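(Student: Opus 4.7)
The plan is to unpack the domination condition in simplicial-complex terms and translate it directly into graph-theoretic language using the formula $(\dom(G):a) = (\dom(G\setminus a), N[a]\setminus\{a\})$ recalled just before the lemma. The key preliminary observation is that a subset $\sigma \subset V\setminus\{a\}$ belongs to $(\dom(G):a)$ if and only if $N[m]\setminus\{a\} \not\subset \sigma$ for every $m \in V$: restricting to $m \neq a$ encodes $\sigma \in \dom(G\setminus a)$, whose non-faces are precisely the supersets of some $N[y]\setminus\{a\}$ with $y \in V\setminus\{a\}$, while the instance $m = a$ encodes the extra requirement $N[a]\setminus\{a\} \not\subset \sigma$. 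This bundling is what allows the single existential ``$m \in V$'' in the statement of the lemma.

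Combined with the fact that the non-faces of $\dom(G)$ are exactly the supersets of some $N[x]$, the failure of ``$a$ dominates $b$'' becomes the existence of a pair $(\sigma, x)$ with $a \notin \sigma$, $N[x]\setminus\{b\} \subset \sigma$, and $N[m]\setminus\{a\} \not\subset \sigma$ for every $m \in V$. For the ``if'' direction I would take $v \in N[b]\setminus N[a]$ violating the stated condition and set $\sigma := N[v]\setminus\{b\}$ and $x := v$: then $v \notin N[a]$ gives $a \notin \sigma$; the failure hypothesis is literally the requirement on $m$; and $v \in N[b]$ gives $\sigma \cup \{b\} = N[v]$, a non-face.

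For the ``only if'' direction, starting from a failure pair $(\sigma, x)$, I would set $v := x$. The inclusion $v \notin N[a]$ is forced because otherwise $a \in N[x]\setminus\{b\} \subset \sigma$, contradicting $a \notin \sigma$. The inclusion $v \in N[b]$ is forced because, otherwise, $N[x]\setminus\{b\} = N[x]$ and the choice $m = x$ yields $N[m]\setminus\{a\} = N[x] \subset \sigma$, contradicting the last condition on $\sigma$. The same last condition then immediately rules out any $m \in V$ with $N[m]\setminus\{a\} \subset N[v]\setminus\{b\} \subset \sigma$, so the statement's condition is violated at $v$.

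The main obstacle is the bookkeeping: recognizing that the quantifier ``$m \in V$'' in the lemma silently merges the two separate constraints coming from the decomposition of $(\dom(G):a)$, namely that $\sigma$ be a face of $\dom(G\setminus a)$ (case $m \neq a$) and that $N[a]\setminus\{a\} \not\subset \sigma$ (case $m = a$). Once this reformulation is in hand, minimizing the witness $\sigma$ to $N[v]\setminus\{b\}$ makes the two directions essentially symmetric, and the non-isolated hypothesis is used only to guarantee that $a$ and $b$ are genuine vertices of $\dom(G)$.
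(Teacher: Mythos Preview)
Your proposal is correct and follows essentially the same route as the paper: in one direction you set $\sigma := N[v]\setminus\{b\}$ for a violating $v\in N[b]\setminus N[a]$, and in the other you extract $v$ from a witnessing pair $(\sigma,x)$ and verify $v\in N[b]\setminus N[a]$ exactly as the paper does. The only cosmetic difference is that you argue both directions uniformly by contrapositive and spell out the characterization of $(\dom(G):a)$ more explicitly up front, whereas the paper proves $(\Rightarrow)$ directly and $(\Leftarrow)$ by contradiction.
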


\begin{proof}
($\Rightarrow$)  Let $v \in N[b] \setminus N[a]$ and consider $\sigma := N[v] \setminus \{b\}$.  Since 
$\sigma \cup \{b\} \notin \Delta $ and $a$ dominates $b$, it follows that $\sigma \notin (\Delta : a)$.  
Thus there is $m \in V$ such that $N[m] \setminus \{a\} \subset \sigma = N[v] \setminus \{b\}$.

\noindent
($\Leftarrow$)  Proceed by contradiction and suppose that $a$ does not dominate $b$; hence there exists 
$\sigma \in (\Delta : a)$ such that $\sigma \cup \{b\} \notin \Delta $.  This means that 
\begin{enumerate}
\item $\nexists \; m \in V$ such that $N[m] \subset \sigma \cup \{a\}$, \label{none}
\item $\exists \: v \in V$ such that $N[v] \subset \sigma \cup \{b\}$. \label{sine}
\end{enumerate}
If $v$ satisfies $N[v] \subset \sigma \cup \{b\}$, then $N[v] \not \subset \sigma $, since otherwise 
(\ref{none}) would not hold.  
Thus $b \in N[v]$; moreover $a \notin N[v]$, since $N[v] \subset \sigma \cup \{b\}$ and $a \notin \sigma $.  
Hence $v \in N[b] \setminus N[a]$.  By assumption there is $m \in V$ such that $N[m] \setminus \{a\} \subset N[v] \setminus \{b\}$ 
and hence $N[m] \subset N[v] \cup \{a\} \setminus \{b\} \subset \sigma \cup \{a\}$, contradicting (\ref{none}).
\end{proof}

\begin{lemma} \label{doscremo}
Let $a,b,c \in V$ and suppose that $N[b]=\{a,b\}$ and $\{a,b,c\} \subset N[a]$.  
Then $\dom (G)$ collapses onto 
$\dom \bigl( G \setminus {\text{edge }} \{a,c\} \bigr)$.
\end{lemma}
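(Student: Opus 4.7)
The plan is to exhibit $\dom (G \setminus \{a,c\})$ as a subcomplex of $\dom (G)$ and to pair up the remaining faces so that they can be peeled off by a sequence of elementary collapses. The containment $\dom (G \setminus \{a,c\}) \subseteq \dom (G)$ is automatic, because every dominating set of the smaller graph is also a dominating set of the larger one. A face $\sigma \in \dom (G)$ fails to lie in $\dom (G \setminus \{a,c\})$ precisely when $V \setminus \sigma$ dominates some vertex of $\{a,c\}$ only through the removed edge. The case where $a$ becomes undominated is excluded by the hypothesis $N[b] = \{a,b\}$: it would force $a, b \in \sigma$, but then $b$ is already undominated in $G$, contradicting $\sigma \in \dom (G)$. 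Hence only $c$ can fail, giving
\[
\cS \;:=\; \dom (G) \setminus \dom (G \setminus \{a,c\}) \;=\; \bigl\{ \sigma \in \dom (G) \;:\; a \notin \sigma,\ c \in \sigma,\ N[c] \setminus \{a,c\} \subseteq \sigma \bigr\}.
\]

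Next I would show that toggling $b$ defines a fixed-point-free involution on $\cS$. The hypothesis $N[b] = \{a,b\}$ enters three times: $b \notin \{a,c\}$ and $b \notin N[c]$ (since the unique neighbor of $b$ is $a \neq c$), so the conditions defining $\cS$ are insensitive to whether $b \in \sigma$; and since $a \in V \setminus \sigma$ dominates $b$, the dominating property of $V \setminus \sigma$ is preserved under both adding and removing $b$. This partitions $\cS$ into pairs $\{\tau,\, \tau \cup \{b\}\}$ with $b \notin \tau$, of consecutive cardinalities $|\tau|$ and $|\tau|+1$.

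Finally, I would perform the elementary collapses of these pairs in decreasing order of $|\tau|$. At each stage I must verify that, in the current complex, $\tau \cup \{b\}$ is maximal and is the unique face strictly containing $\tau$. Any strict superset $\eta$ of $\tau$ that contains $a$ is not in $\dom (G)$ at all: if $b \in \eta$ then $b$ is undominated, while if $b \notin \eta$ then $N[c] \subseteq \eta$ (using $c \in \tau \subseteq \eta$, $N[c] \setminus \{a,c\} \subseteq \tau \subseteq \eta$, and $a \in \eta$), so $c$ is undominated. Thus every remaining strict superset of $\tau$ or of $\tau \cup \{b\}$ avoids $a$ and therefore lies in $\cS$; it is matched with its $b$-toggle into a pair of strictly larger maximal cardinality, which has been collapsed at a previous stage. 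The only real (and purely bookkeeping) obstacle is this last verification; once the description of $\cS$ and the $b$-toggle involution are set up, everything reduces to the three uses of $N[b]=\{a,b\}$ recorded above.
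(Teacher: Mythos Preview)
Your proof is correct and takes essentially the same approach as the paper's: both identify the faces to be removed as precisely those $\sigma\in\dom(G)$ containing $N[c]\setminus\{a\}$ (your extra condition $a\notin\sigma$ is automatic, since $N[c]\subseteq\sigma$ would leave $c$ undominated) and collapse them via the $b$-toggle. The paper packages this by observing that the link $\bigl(\dom(G):N[c]\setminus\{a\}\bigr)$ is a cone with apex $b$, lifting its collapse to $\emptyset$ to a collapse of $\dom(G)$ onto the face-deletion $\bigl(\dom(G),N[c]\setminus\{a\}\bigr)$, and then identifying the latter with $\dom\bigl(G\setminus\text{edge }\{a,c\}\bigr)$ by comparing minimal non-faces; your explicit matching and decreasing-cardinality ordering is exactly the unwinding of that link-cone collapse.
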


\begin{proof}
Thanks to Lemma~\ref{dodo} $a$ dominates $b$ since $N[a] \supset N[b]$; 
$\bigl( \dom (G) , a \bigr)$ is a cone with apex $b$.  
Let $L = \bigl( \dom (G) : N[c] \setminus \{a\} \bigr) \subset \bigl( \dom (G) , a \bigr)$.  
The simplicial complex $L$ is a cone with apex $b$.  
Let $(\sigma _1 \supset \tau _1)$, \ldots , $(\sigma _r \supset \tau _r)$ be a sequence of 
elementary collapses of $L$ to $\emptyset $; adding to $\sigma _i$ and $\tau _i$ the 
face $N[c] \setminus \{a\}$ for $1 \leq i \leq r$, we obtain a sequence of elementary collapses 
of $\dom (G)$ onto the simplicial complex $\bigl( \dom (G) , N[c] \setminus \{a\} \bigr)$.  It 
remains to show that $\bigl( \dom (G) , N[c] \setminus \{a\} \bigr) = 
\dom \bigl( G \setminus {\text{edge }} \{a,c\} \bigr)$.  The minimal non-faces of 
$\bigl( \dom (G) , N[c] \setminus \{a\} \bigr)$ and 
$\dom \bigl( G \setminus {\text{edge }} \{a,c\} \bigr)$ are respectively the minimal elements of 
$$ \bigl\{ N[v] \,|\, v \in V \bigr\} \cup \bigl\{ N[c] \setminus \{a\} \bigr\} $$
and the minimal elements of 
$$ \bigl\{ N[v] \,|\, v \in V \setminus \{a,c\} \bigr\} \cup \bigl\{ N[c] \setminus \{a\} , 
N[a] \setminus \{c\} \bigr\} , $$
where by $N[v]$ we mean the closed neighborhood of $v$ in the graph $G$.  Since 
$N[b] \subset N[a] \setminus \{c\}$, the minimal elements of the two sets above are 
the same.
\end{proof}

We now consider the dominance complex of a forest $F$.  Iterating as long as we can 
the removal of an edge satisfying the conditions of Lemma~\ref{doscremo}, we obtain 
a subforest $F'$ of $F$ containing only isolated vertices and edges.  The forest $F'$ 
depends on the choices of edges; the number $r$ of edges of $F'$, though, is independent 
of the choices thanks to the following result.

\begin{prop} \label{dosemplice}
Let $F$ be a forest. Then 
\begin{enumerate}
\item $\dom (F)$ is a grape; \label{doseuno}
\item $\dom (F)$ collapses onto the boundary of an $r-$dimensional cross-polytope, 
where $r$ is the number of edges of $F'$. \label{dosedue}
\end{enumerate}
\end{prop}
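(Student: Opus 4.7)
The plan is to establish parts (1) and (2) by two independent inductions on $F$, both hinging on the same identification of the link complex.

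For part (1), I proceed by induction on $|V(F)|$. The base case $|V(F)|\leq 1$ is immediate: the only dominating set is $V(F)$, so $\dom(F)=\{\emptyset\}$ has no vertices and is a grape by clause~(2) of Definition~\ref{grappolo}. For the inductive step, assuming $F$ has at least one edge, I pick any leaf $b$ of $F$ and let $a$ be its unique neighbor, exhibiting $a$ as a grape vertex witnessed by $b$. First, $a$ dominates $b$ in $\dom(F)$ because $N[b]=\{a,b\}\subset N[a]$ makes the condition of Lemma~\ref{dodo} vacuously true. Second, $(\dom(F),a)$ is a cone with apex $b$: this is exactly the observation recorded at the start of the proof of Lemma~\ref{doscremo} (and uses only $N[b]\subset N[a]$, not the additional vertex $c$), and hence it is a grape. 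Third, and crucially, $(\dom(F):a)=\dom(F\setminus a)$: by the formula $(\dom(F):a)=(\dom(F\setminus a),N[a]\setminus\{a\})$, I just need to note that $b$ becomes isolated in $F\setminus a$, so $b$ lies in no face of $\dom(F\setminus a)$, and since $b\in N[a]\setminus\{a\}$ the face-deletion is vacuous. Since $F\setminus a$ is a forest with strictly fewer vertices, $\dom(F\setminus a)$ is a grape by induction, and $\dom(F)$ is therefore a grape.

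For part (2), I induct on the number of edges of $F$. If no leaf of $F$ has a neighbor of degree at least two, then every leaf lies in an isolated edge, so $F=F'$ is already a disjoint union of $r$ isolated edges and some isolated vertices: the isolated vertices contribute no faces of $\dom(F')$, while the $2r$ endpoints of the edges form a complex whose minimal non-faces are exactly the $r$ edges, which is precisely the boundary of the $r$-dimensional cross-polytope. Otherwise, I take a leaf $b$ whose unique neighbor $a$ has an additional neighbor $c$ and apply Lemma~\ref{doscremo} to collapse $\dom(F)$ onto $\dom(F\setminus\text{edge }\{a,c\})$; composing with the induction hypothesis produces the required collapse. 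Independence of $r$ from the sequence of edges chosen then follows because $\dom(F)$ is simple-homotopic to $S^{r-1}$, whose homotopy type determines $r$.

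The main obstacle in the whole argument is the identification $(\dom(F):a)=\dom(F\setminus a)$ used in part (1). Although the underlying observation---that a leaf is isolated once its neighbor is removed---is very simple, it is slightly subtle that a face-deletion by the possibly large set $N[a]\setminus\{a\}$ is forced to be trivial by the presence of the single element $b$ inside it. Once this identification is in hand, both inductions run smoothly and the rest of the proof is essentially bookkeeping.
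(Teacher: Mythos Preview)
Your proof is correct and follows essentially the same approach as the paper: for part~(1) you pick the neighbor $a$ of a leaf $b$, verify via Lemma~\ref{dodo} that $a$ dominates $b$, note that $(\dom(F),a)$ is a cone with apex $b$, identify $(\dom(F):a)$ with $\dom(F\setminus a)$, and induct on $|V|$; for part~(2) you iterate Lemma~\ref{doscremo} down to $F'$ and recognize $\dom(F')$ as the cross-polytope boundary. Your explicit justification of $(\dom(F):a)=\dom(F\setminus a)$---using that $b$ becomes isolated in $F\setminus a$, so the face-deletion by $N[a]\setminus\{a\}$ is vacuous---spells out a detail the paper leaves implicit.
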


\begin{proof}
(\ref{doseuno}) By Lemma~\ref{dodo} the vertex $a$ adjacent to a leaf $b$ dominates $b$, since 
$N[a] \supset N[b]$.  The complex $(\dom(F) , a )$ is a cone with apex $b$, and 
$(\dom (F) : a) = \dom \bigl( F \setminus a \bigr)$.  Hence 
the result follows by induction on the number of vertices.

\noindent
(\ref{dosedue})  It follows at once from Lemma~\ref{doscremo} that $\dom (F)$ 
collapses onto $\dom (F')$.  Since the dominance complex of $F'$ is the boundary 
of the cross-polytope of dimension $r$, where $r$ is the number of edges of $F'$, 
the result follows.
\end{proof}

It can be proved that $r = \alpha _0 (F) = \beta _1 (F)$ (see~\cite[Theorem~8.1]{MT1}).

\subsection{Matching complex}

Let $G=(V,E)$ be a graph.  We define a simplicial complex $M(G)$ on $E$ 
whose faces are the matchings of $G$, i.e.~sets of pairwise disjoint edges.  We 
note that $M(G)$ is the independence complex of the line dual of $G$, i.e.~of the 
graph whose vertices are the edges of $G$ and where $\{e_1,e_2\}$ is an edge 
if $e_1 \neq e_2$ and $e_1 \cap e_2 \neq \emptyset $.  Note that if $e = \{x,y\} \in E$, 
then $\bigl( M(G) , e \bigr) = M(G \setminus e)$ and 
$\bigl( M(G) : e \bigr) = M \bigl( G \setminus \{x\} \setminus \{y\} \bigr)$.  

If $F$ is a forest, then the line dual of $F$ is not a forest unless $F$ is a 
disjoint union of paths.  Hence Theorem~\ref{inte} does not apply to $M(F)$.  
Nevertheless, we have the following result.

\begin{theorem}
Let $F = (V,E)$ be a forest.  Then $M(F)$ is a grape.
\end{theorem}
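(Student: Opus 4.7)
The plan is to proceed by induction on the number of edges of $F$, mirroring the strategy used for $OF$ in Theorem~\ref{mudifo} and for $\ind$ in Theorem~\ref{inte}. The base case is immediate: if $F$ has at most one edge then $M(F)$ has at most one vertex and is a grape by clause~(2) of Definition~\ref{grappolo}.

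For the inductive step, I would first dispose of the degenerate situation in which every component of $F$ is either an isolated vertex or a single edge: in that case any edge $e$ of $F$ is vertex-disjoint from every other edge, hence $e$ lies in every matching and $M(F)$ is a cone with apex $e$, which is a grape. Otherwise $F$ has a component with at least two edges; in such a component pick a leaf $y$, let $x$ be its unique neighbor (necessarily of degree $\geq 2$), and let $z$ be any other neighbor of $x$. The key claim is that the edge $e = \{x,z\}$ dominates the edge $e' = \{x,y\}$ in $M(F)$.

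To verify this claim I would use the identification, recorded in the paragraph preceding the theorem, of $M(F)$ with the independence complex $\ind(L(F))$ of the line dual $L(F)$; then Lemma~\ref{indo} reduces the check to the inclusion $N_{L(F)}[e'] \setminus \{e'\} \subseteq N_{L(F)}[e]$. Because $y$ is a leaf of $F$, the only edge of $F$ through $y$ is $e'$ itself, so the edges of $F$ adjacent to $e'$ in $L(F)$ other than $e'$ are exactly the edges of $F$ through $x$ distinct from $e'$; each of these shares the vertex $x$ with $e$ and therefore lies in $N_{L(F)}[e]$, as required.

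Once the domination is in hand, the recursion closes automatically: $(M(F),e) = M(F \setminus e)$ and $(M(F) : e) = M(F \setminus \{x\} \setminus \{z\})$ are both matching complexes of subgraphs of $F$, which are again forests, and each has strictly fewer edges than $F$. The inductive hypothesis then shows that both are grapes, so condition~(1) of Definition~\ref{grappolo} is satisfied. The only step at which one must exercise a little care is the preliminary reduction, because the domination argument genuinely requires a vertex $x$ of degree at least $2$ to serve as the common endpoint of $e$ and $e'$; the isolation of the all-components-of-size-at-most-one case is precisely what guarantees that such an $x$ exists whenever we actually need one.
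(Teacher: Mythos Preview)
Your proof is correct and follows essentially the same route as the paper: induct on the number of edges, dispose of the cone case when a leaf edge is isolated, and otherwise invoke Lemma~\ref{indo} on the line dual to show that an edge $\{x,z\}$ through the neighbor of a leaf dominates the leaf edge $\{x,y\}$, reducing both $(M(F),e)$ and $(M(F):e)$ to matching complexes of smaller forests. The only cosmetic difference is that the paper checks locally whether the chosen leaf's edge is isolated, whereas you first globally isolate the case in which every component has at most one edge.
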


\begin{proof}
We proceed by induction on the number of edges of $F$, the base case being obvious.  
Let $b$ be a leaf and let $a$ be adjacent to $b$.  If the edge $\{a,b\}$ is isolated, 
then $M(F)$ is a cone with apex $\{a,b\}$ and hence it is a grape.  Otherwise let $c \neq b$ 
be adjacent to $a$.  By Lemma~\ref{indo}, the edge $\{a,c\}$ dominates the edge 
$\{a,b\}$ in $M(F)$.  The result follows by induction since $\bigl( M(F) , \{a,c\} \bigr)$ 
and $\bigl( M(F) : \{a,c\} \bigr)$ are matching complexes of forests.
\end{proof}

\begin{example}
The simplicial complex $M(F)$ may be a wedge of spheres of different dimensions.  
Let $F$ be the tree depicted in the following figure.
\begin{center}
\begin{minipage}{150pt}
$$ \xygraph {[] !~:{@{.}} 
!{<0pt,0pt>;<20pt,0pt>:} 
{\bullet} 
!{\save +<-8pt,0pt>*\txt{$\scriptstyle a$}  \restore}
[dd] {\bullet} 
!{\save +<-8pt,0pt>*\txt{$\scriptstyle b$}  \restore}
[ur] {\bullet} 
!{\save +<0pt,7pt>*\txt{$\scriptstyle c$}  \restore}
[r] {\bullet} 
!{\save +<0pt,8pt>*\txt{$\scriptstyle d$}  \restore}
[ur] {\bullet} 
!{\save +<8pt,0pt>*\txt{$\scriptstyle e$}  \restore}
[dd] {\bullet} 
!{\save +<8pt,0pt>*\txt{$\scriptstyle f$}  \restore}
[rl] - [ul] - [ur] [dl] - [l] - [ul] [dd] - [ur] } $$

\medskip
\centerline{The tree $F$}
\end{minipage} 
\hspace{10pt}
\begin{minipage}{150pt}
$$ \xygraph {[] !~:{@{.}} 
!{<0pt,0pt>;<20pt,0pt>:} 
[d] {\bullet} 
!{\save +<-13pt,0pt>*\txt{$\scriptstyle \{c,d\}$}  \restore}
[urr] {\bullet} 
!{\save +<-13pt,0pt>*\txt{$\scriptstyle \{a,c\}$}  \restore}
[rr] {\bullet} 
!{\save +<13pt,0pt>*\txt{$\scriptstyle \{d,e\}$}  \restore}
[dd] {\bullet} 
!{\save +<13pt,0pt>*\txt{$\scriptstyle \{b,c\}$}  \restore}
[ll] {\bullet} 
!{\save +<-13pt,0pt>*\txt{$\scriptstyle \{d,f\}$}  \restore}
[rl] - [uu] - [rr] - [dd] - [ll] } $$

\medskip
\centerline{The simplicial complex $M(F)$}
\end{minipage} 
\end{center}
The complex $M(F)$ is homeomorphic to $S^1 \vee S^0$.
\end{example}

\subsection{Edge covering complex}

Let $G=(V,E)$ be a graph.  We define a simplicial complex $EC(G)$ on 
$E$ whose faces are the complements of the edge covers of $G$.  
For all $v \in V$, let ${\rm star} (v) = \bigl\{ e \in E \,|\, v \in e \bigr\}$; thus 
the minimal non-faces of 
$EC(G)$ are the minimal elements of $\bigl\{ {\rm star} (v) \,|\, v \in V \bigr\}$.  Note that 
if $v$ is an isolated vertex, then $EC(G) = \emptyset$.  
Let $e = \{x,y\} \in E$; then $\bigl( EC(G) : e \bigr) = EC(G \setminus e)$ since 
the minimal non-faces of $\bigl( EC(G) : e \bigr)$ are the minimal elements of 
$$ \bigl\{ {\rm star} (v) \,|\, v \in V , v \neq x,y \bigr\} \cup 
\bigl\{ {\rm star} (x) \setminus \{e\}, {\rm star} (y) \setminus \{e\} \bigr\} . $$
The complex $EC(G)$ is a cone with apex $e$ 
if and only if $x$ and $y$ are both adjacent to leaves.

\begin{theorem}
Let $F$ be a forest.  Then $EC(F)$ is a grape.  Moreover, $EC(F)$ is either 
contractible or homotopic to a sphere.
\end{theorem}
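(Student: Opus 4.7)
The plan is to induct on the number of edges of $F$. I would first dispose of degenerate situations via a few easy reductions: if $F$ has an isolated vertex then $EC(F)=\emptyset$; if $F$ has an isolated edge $\{a,b\}$ as a component, then $\{a,b\}$ is a minimal non-face of $EC(F)$, so removing this component yields an isomorphic simplicial complex and we apply the inductive hypothesis; and if some vertex $a$ has two distinct leaf neighbors $b,b'$, then $\{a,b\}$ is already a minimal non-face of $EC(F)$, so $EC(F)$ is isomorphic to $EC(F\setminus b)$ and we induct.

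Next I would pick a leaf $b$ with neighbor $a$, where $a$ is not a leaf and has exactly one leaf neighbor; then $a$ admits a non-leaf neighbor $c$, and I would split according to the local structure at $c$. If $c$ also has a leaf neighbor, then both endpoints of the edge $\{a,c\}$ are adjacent to leaves, so by the cone characterization stated just before the theorem, $EC(F)$ is a cone with apex $\{a,c\}$, hence contractible and a grape.

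The delicate case is when $c$ has no leaf neighbors. Since $c$ is non-leaf, it has a neighbor $d\neq a$, which must also be non-leaf. The key claim is that $(EC(F),\{c,d\})$ is itself a cone with apex $\{a,c\}$: in any edge cover of $F$ containing $\{c,d\}$ the edge $\{a,c\}$ is removable, since $a$ is already covered by the mandatory edge $\{a,b\}$ and $c$ by $\{c,d\}$. Consequently $\{c,d\}$ dominates $\{a,c\}$, the face-deletion $(EC(F),\{c,d\})$ is contractible, and the link $(EC(F):\{c,d\})=EC(F\setminus\{c,d\})$ is a grape by the inductive hypothesis, because $d$ non-leaf ensures that $F\setminus\{c,d\}$ contains no new isolated vertex. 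Since the inclusion of the link into a contractible complex is automatically null-homotopic, $EC(F)$ is a grape.

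Finally, the contractibility of $(EC(F),\{c,d\})$ reduces the wedge formula $\Delta \simeq (\Delta,a)\vee \Sigma(\Delta:a)$ from Section~\ref{setre} to $EC(F)\simeq \Sigma EC(F\setminus\{c,d\})$, and by the inductive hypothesis $EC(F\setminus\{c,d\})$ is contractible or a sphere, a property preserved by suspension. I expect the main obstacle to be the choice of dominating vertex in the delicate case: the natural guess $\{a,c\}$ does not work in general, and one must descend one level further to $\{c,d\}$, where the interaction of the mandatory edge $\{a,b\}$ with the forced inclusion of $\{c,d\}$ is exactly what renders the intermediate edge $\{a,c\}$ universally redundant.
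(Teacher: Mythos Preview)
Your proof is correct and follows essentially the same route as the paper: with $b,a,c,d$ playing the role of the paper's $x_1,x_2,x_3,x_4$, you use the identical key observation that $(EC(F),\{c,d\})$ is a cone with apex $\{a,c\}$ (because $a$ is covered by the mandatory edge $\{a,b\}$ and $c$ by $\{c,d\}$), whence $\{c,d\}$ dominates $\{a,c\}$ and $EC(F)\simeq\Sigma\,EC(F\setminus\{c,d\})$. The only cosmetic difference is the setup: the paper takes ``disjoint union of stars'' as its base case (where $EC(F)=\{\emptyset\}$) and then directly produces a length-$3$ path from a leaf, whereas you first perform the reductions on isolated edges and duplicate leaves---these reductions are valid but unnecessary, since they are absorbed by the paper's single base case.
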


\begin{proof}
We may assume that $F$ has no isolated vertices, since $\emptyset $ is contractible.  
Proceed by induction on the number of edges of $F$.  If $F$ is a disjoint 
union of stars, then $EC(F) = \{\emptyset\}$, the $(-1)-$dimensional sphere.  
Otherwise, let $x_1 ,  \ldots , x_4$ be distinct vertices 
such that $\{x_1,x_2\}$, $\{x_2,x_3\}$, $\{x_3,x_4\}$ are edges and $x_1$ is 
a leaf.  If $x_4$ is a leaf, then $EC(F)$ is a cone with apex $\{x_2,x_3\}$ and we are 
done.  If $x_4$ is not a leaf, then $\{x_3,x_4\}$ dominates $\{x_2,x_3\}$ since 
$\bigl( EC(F) , \{x_3,x_4\} \bigr)$ is a cone with apex $\{x_2,x_3\}$.  Hence $EC(F)$ 
is homotopic to the suspension of $EC \bigl(F \setminus \{x_3,x_4\} \bigr)$, and we conclude 
by the inductive hypothesis.
\end{proof}

The following result relates the simplicial complex $EC(F)$ on $E$ to the simplicial 
complex $\ind (F)$ on $V$.  We let $\kappa (F)$ denote the number of connected 
components of $F$, or equivalently $\kappa (F) = |V| - |E|$.

\begin{theorem}
Let $F$ be a forest.  Then $EC(F)$ is homotopic to a sphere (resp.~contractible) 
if and only if $\ind (F)$ is homotopic to a sphere (resp.~contractible).  Moreover 
if $EC(F)$ is not contractible, the dimension of the sphere associated to $EC(F)$ 
is $i(F) - \kappa (F) - 1 = \gamma (F) - \kappa (F) - 1$.
\end{theorem}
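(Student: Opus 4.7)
The plan is to induct on the number of edges of $F$, mirroring the proof of the preceding theorem. I begin with the base cases. If $F$ has an isolated vertex $v$, then no edge cover exists, so $EC(F)=\emptyset$ is contractible, and $\ind(F)$ is a cone with apex $v$ and is also contractible. Otherwise, if $F$ is a disjoint union of $m=\kappa(F)$ stars each with at least one leaf, then $EC(F)=\{\emptyset\}=S^{-1}$; using the join decomposition $\ind(F_1\sqcup F_2)={\rm join}(\ind(F_1),\ind(F_2))$ together with the direct computation $\ind(\text{star})\simeq S^0$, one finds $\ind(F)\simeq S^{m-1}$. Since $i(F)=m=\kappa(F)$ in this case, both complexes are spheres and $\dim EC(F)=-1=i(F)-\kappa(F)-1$.

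For the inductive step, assume that $F$ has no isolated vertex and is not a disjoint union of stars, and pick a path $x_1-x_2-x_3-x_4$ in $F$ with $x_1$ a leaf, as in the proof of the preceding theorem. If $x_4$ is also a leaf, that theorem shows that $EC(F)$ is a cone with apex $\{x_2,x_3\}$, hence contractible; for $\ind$, Lemma~\ref{scremo} gives that $\ind(F)$ collapses onto $\ind(F\setminus x_3)$, and since $x_4$'s only neighbor in $F$ was $x_3$, the vertex $x_4$ is isolated in $F\setminus x_3$, so $\ind(F\setminus x_3)$ is a cone with apex $x_4$ and therefore contractible. Both complexes are contractible, as required.

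Suppose finally that $x_4$ is not a leaf, and set $e=\{x_3,x_4\}$ and $F'=F\setminus e$. Then $F'$ is a forest with no isolated vertex, $\kappa(F')=\kappa(F)+1$, and one fewer edge than $F$, and the preceding theorem gives $EC(F)\simeq\Sigma EC(F')$. The key step is to prove the homotopy equivalence $\ind(F)\simeq\ind(F')$: applying Lemma~\ref{scremo} to $x_3$ (which has distance two from the leaf $x_1$) in both $F$ and $F'$ yields that $\ind(F)$ collapses onto $\ind(F\setminus x_3)$ and $\ind(F')$ collapses onto $\ind(F'\setminus x_3)$, and since deleting the vertex $x_3$ removes every edge incident to it, including $e$, we have $F\setminus x_3=F'\setminus x_3$, so the two target complexes coincide. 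The inductive hypothesis applied to $F'$ then closes both statements: $EC(F)$ is contractible if and only if $EC(F')$ is, if and only if $\ind(F')$ is, if and only if $\ind(F)$ is; and in the sphere case $\dim EC(F)=\dim EC(F')+1=i(F')-\kappa(F')=i(F)-\kappa(F)-1$, where the last equality uses $\kappa(F')=\kappa(F)+1$ together with $i(F')=i(F)$ (which follows from $\ind(F)\simeq\ind(F')$ combined with the identification $\dim\ind=i-1$ in the sphere case from~\cite[Theorem~6.4]{MT1}). The main obstacle is synchronizing the two reductions $F\mapsto F'$ (for $EC$) and $F\mapsto F\setminus x_3$ (for $\ind$); the identity $F\setminus x_3=F'\setminus x_3$ is what makes the induction close.
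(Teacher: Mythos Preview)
Your argument is correct and follows essentially the same route as the paper's proof: the same base cases, the same choice of $x_1,\ldots,x_4$, the same case split on whether $x_4$ is a leaf, and the same pair of reductions $EC(F)\simeq\Sigma EC(F')$ and $\ind(F)\simeq\ind(F')$ in the main inductive step. Your justification of $\ind(F)\simeq\ind(F')$ via $F\setminus x_3=F'\setminus x_3$ is exactly what the paper means when it invokes Lemma~\ref{scremo} (the paper states this step more tersely), and your handling of the $x_4$-leaf case using a single application of Lemma~\ref{scremo} is a minor variant of the paper's argument, which instead observes that both $(\ind(F),x_3)$ and $(\ind(F):x_3)$ are cones.
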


\begin{proof}
We may assume that $F$ has no isolated vertices since in this case $EC(F) = \emptyset $, 
$\ind (F)$ is a cone and therefore they are both contractible.  
Proceed by induction on the number of edges of $F$.  If $F$ is a disjoint 
union of stars, then $EC(F) = \{\emptyset\}$, the $(-1)-$dimensional sphere, 
and $\ind (F) \simeq S^{\kappa (F) - 1}$ (see~\cite{MT1}).  Otherwise, let 
$x_1 ,  \ldots , x_4 \in V$ be such that $\{x_1,x_2\}$, $\{x_2,x_3\}$, $\{x_3,x_4\}$ 
are edges and $x_1$ is a leaf.  If $x_4$ is a leaf, then $EC(F)$ is a cone with 
apex $\{x_2,x_3\}$; $x_3$ dominates $x_4$ in $\ind (F)$ and both 
$(\ind (F) , x_3 \bigr)$ and $(\ind (F) : x_3 \bigr)$ are cones; thus $EC(F)$ and 
$\ind (F)$ are both contractible.  If $x_4$ is not a leaf, then $EC(F)$ is homotopic 
to $\Sigma \bigl( EC(F') \bigr)$, where $F' = F \setminus {\text{edge }} \{x_3,x_4\}$, 
while $\ind (F)$ is homotopic to $\ind (F')$ by Lemma~\ref{scremo}.  
By the inductive hypothesis we have that $EC \bigl( F' \bigr)$ and $\ind (F')$ are 
either both contractible or both homotopic to spheres and thus also 
$EC(F)$ and $\ind (F)$ have the same property.  Moreover if $EC(F)$ is not contractible, 
then it is homotopic to a sphere of dimension 
$\gamma (F') - \kappa (F') = \gamma (F) - \kappa (F) - 1$.  The equalities $i(F)=i(F')$
and $\gamma (F) = i(F)$, when $EC(F)$ and $\ind (F)$) are not contractible, 
follow from~\cite[Theorem~6.4]{MT1}.
\end{proof}

\subsection{Edge dominance complex}

Let $G=(V,E)$ be a graph.  We define a simplicial complex $ED(G)$ on $E$ 
whose faces are the complements of the dominating sets of the line dual of $G$.  
For all $e \in E$, let ${\rm star} (e) = \bigl\{ f \in E \,|\, f \cap e \neq \emptyset \bigl\}$; thus 
the minimal non-faces of $ED(G)$ are the minimal elements of 
$\bigl\{ {\rm star} (e) \,|\, e \in E \bigr\}$.

\begin{theorem}
Let $F$ be a forest.  Then $ED(F)$ is a grape.  Moreover $ED(F)$ is homotopic to a sphere 
of dimension $|E| - \beta_1 (F) - 1 = |E| - \alpha _0 (F) - 1$.
\end{theorem}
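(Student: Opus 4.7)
The plan is to induct on $|E|$, paralleling the proof of Proposition~\ref{dosemplice} for $\dom(F)$. In the base case $|E|=0$ we have $ED(F)=\{\emptyset\}=S^{-1}$ with $\beta_1(F)=0$, matching the formula. If $F$ contains an isolated edge $e$, then $\mathrm{star}(e)=\{e\}$ is a minimal non-face, so $e$ is not a vertex of $ED(F)$ and $ED(F)=ED(F\setminus e)$ as simplicial complexes; since $e$ lies in every maximum matching of $F$, we have $\beta_1(F\setminus e)=\beta_1(F)-1$, and the inductive hypothesis closes this case. Otherwise I pick a leaf $x_1$ of $F$ with neighbor $x_2$, and, since $e_1:=\{x_1,x_2\}$ is then not isolated, a second neighbor $x_3$ of $x_2$; let $e_2:=\{x_2,x_3\}$.

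The key computation is that $\bigl(ED(F),e_2\bigr)$ is a cone with apex $e_1$: given a face $\sigma$ of $ED(F)$ with $e_2\notin\sigma$, I verify that $\sigma\cup\{e_1\}$ still avoids every $\mathrm{star}(e)$. The only stars possibly violated by adjoining $e_1$ are the $\mathrm{star}(e)$ with $e$ sharing a vertex with $e_1$; because $x_1$ is a leaf these are exactly $e_1$ itself together with the edges incident to $x_2$, and each such star also contains $e_2$. Since $e_2\notin\sigma\cup\{e_1\}$, no star becomes covered. This establishes the cone property, whence $\bigl(ED(F),e_2\bigr)$ is a grape, $\bigl(ED(F):e_2\bigr)$ is contractible in it, and $e_2$ dominates $e_1$; by the remarks at the end of Section~\ref{setre}, $ED(F)\simeq\Sigma\bigl(ED(F):e_2\bigr)$.

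I would then identify $\bigl(ED(F):e_2\bigr)=ED(F\setminus e_2)$. The two complexes share the minimal non-faces $\mathrm{star}_F(e)\setminus\{e_2\}=\mathrm{star}_{F\setminus e_2}(e)$ for $e\neq e_2$, and differ only in the potential extra minimal non-face $\mathrm{star}_F(e_2)\setminus\{e_2\}$ appearing in $\bigl(ED(F):e_2\bigr)$. But the leaf hypothesis on $x_1$ gives $\mathrm{star}_F(e_1)\setminus\{e_2\}$ equal to the edges at $x_2$ other than $e_2$, which is contained in $\mathrm{star}_F(e_2)\setminus\{e_2\}$, so the extra non-face is redundant. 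The dimension recurrence then needs $\beta_1(F\setminus e_2)=\beta_1(F)$: any maximum matching $M$ containing $e_2$ leaves $x_1$ unmatched, so $(M\setminus\{e_2\})\cup\{e_1\}$ is a maximum matching avoiding $e_2$. Combining the inductive hypothesis with the suspension gives $ED(F)\simeq S^{|E|-\beta_1(F)-1}$, and the equivalent formulation with $\alpha_0$ follows from K\"onig's Theorem~\ref{konig} since forests are bipartite. The delicate point is the identification $\bigl(ED(F):e_2\bigr)=ED(F\setminus e_2)$: it is the leaf hypothesis on $x_1$ that keeps $\mathrm{star}_F(e_1)$ small enough to absorb the otherwise non-redundant $e_2$-non-face, so the choice of $e_1$ adjacent to a leaf is essential.
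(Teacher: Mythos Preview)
Your proof is correct and follows essentially the same route as the paper's: induct on $|E|$, pick a leaf edge $e_1=\{x_1,x_2\}$ and an adjacent edge $e_2=\{x_2,x_3\}$, use the inclusion $\mathrm{star}(e_1)\subset\mathrm{star}(e_2)$ to see that $e_2$ dominates $e_1$ with $(ED(F),e_2)$ a cone of apex $e_1$, identify $(ED(F):e_2)=ED(F\setminus e_2)$, and track $\beta_1$ via the swap $e_2\mapsto e_1$. The only cosmetic differences are that the paper packages the domination step as an application of Lemma~\ref{dodo} to the line graph rather than verifying the cone property directly, and treats isolated edges as part of the base case rather than peeling them off one at a time.
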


\begin{proof}
Proceed by induction on the number of edges of $F$.  If $F$ consists only of isolated vertices 
and edges, then $ED(F) = \{ \emptyset \}$, the $(-1)-$dimensional sphere, and the result is clear.  
Let $b$ be a leaf of $F$ and let $\{a,c\}$ be an edge of $F$ such that $a$ is adjacent 
to $b$ and $c \neq b$.  Since ${\rm star} \bigl( \{a,b\} \bigr) \subset {\rm star} \bigl( \{a,c\} \bigr)$, 
we deduce from Lemma~\ref{dodo} that $\{a,c\}$ dominates $\{a,b\}$.  The complex 
$\bigl( ED(F) , \{a,c\} \bigr)$ is a cone with apex $\{a,b\}$.  Since 
$\bigl( ED(F) : \{a,c\} \bigr) = ED \bigl( F \setminus {\text{edge }}\{a,c\} \bigr)$ and 
$ED(F) \simeq \Sigma \bigl( ED(F) : \{a,c\} \bigr)$, we conclude by induction that $ED(F)$ is a 
grape and that it is homotopic to a sphere.  To compute the dimension of the sphere, let 
$M \subset E$ be a matching of maximum cardinality and $b$ be a leaf adjacent to the vertex $a$.  
We may assume that the edge $\{a,b\}$ is not isolated.  If $\{a,b\} \in M$, then removing an 
edge $\{a,c\}$ with $c \neq b$ we may conclude by induction.  If $\{a,b\} \notin M$, then 
an edge $\{a,c\} \in M$ for exactly one $c$.  The set $M \cup \{a,b\} \setminus \{a,c\}$ is again 
a matching with same cardinality as $M$, and we may conclude as before.  The last equality follows 
by a similar argument or by Theorem~\ref{konig}.
\end{proof}


\subsection{Interval order complex}

Let $X$ be a finite set of closed bounded intervals in $\mathbb{R}$; the interval 
order complex on $X$ is the simplicial complex $\cO (X)$ whose faces are the 
subsets of $X$ consisting of pairwise disjoint intervals.  The simplicial complex 
$\cO(X)$ is shellable thanks to~\cite{BM}.  In particular, it follows that $\cO(X)$ 
is contractible or homotopic to a wedge of spheres.  We give a short 
direct computation of the homotopy type of $\cO (X)$.

Associated to $X$ there 
is also a graph $O(X) = (V,E)$, where $V=X$ and $\{ I,J \} \in E$ if and only if 
$I \cap J \neq \emptyset$.  Clearly, $\ind \bigl(O(X) \bigr) = \cO (X)$.  Theorem~\ref{inte} 
does not apply to $\ind \bigl( O(X) \bigr)$, since $O(X)$ is not in general a forest.  
Nevertheless we have the following result.

\begin{theorem}
The simplicial complex $\cO (X)$ is a grape.
\end{theorem}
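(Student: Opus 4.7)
The plan is to induct on the cardinality $|X|$, with the case $|X|\leq 1$ immediate from the definition of a grape. For the inductive step I will exhibit a pair $I,J\in X$ with $I$ dominating $J$ in $\cO(X)$. Once this is done, both $\bigl(\cO(X),J\bigr) = \cO(X\setminus \{J\})$ and $\bigl(\cO(X):J\bigr) = \cO(X\setminus N[J])$ (cf.~(\ref{proide})) are interval order complexes on strictly smaller collections of intervals, so the inductive hypothesis together with Definition~\ref{grappolo} yields that $\cO(X)$ is a grape.

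To produce the dominating pair, I would choose $J=[a_J,b_J]\in X$ to be an interval whose right endpoint $b_J$ is minimum. If $J$ is an isolated vertex of $O(X)$, then $\cO(X)$ is a cone with apex $J$, hence a grape. Otherwise, pick any $I\in X\setminus\{J\}$ with $I\cap J\neq \emptyset$. Since $I$ meets $J$, the left endpoint of $I$ is at most $b_J$, while the minimality of $b_J$ forces the right endpoint of $I$ to be at least $b_J$; consequently $b_J\in I$. Precisely the same reasoning applies to every other $K\in X\setminus\{J\}$ with $K\cap J\neq \emptyset$, yielding $b_J\in K$ and hence $K\cap I\neq\emptyset$. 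Thus $N[J]\setminus\{J\}\subset N[I]$ in $O(X)$, and Lemma~\ref{indo} shows that $I$ dominates $J$ in $\cO(X)=\ind\bigl(O(X)\bigr)$.

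The main obstacle is the choice of the pair $(I,J)$ in the absence of any \emph{leaf} of $O(X)$: since $O(X)$ need not be a forest, Theorem~\ref{inte} does not apply, and the combinatorial recipes used there are unavailable. The key observation is a Helly-type property of intervals on the real line: the interval with smallest right endpoint $b_J$ automatically contains $b_J$, and so does every interval that meets it. This single point of common intersection is exactly what is needed to verify the hypothesis $N[J]\setminus\{J\}\subset N[I]$ of Lemma~\ref{indo}, and it is what reduces the proof of the theorem to a one-line extremal selection followed by an induction.
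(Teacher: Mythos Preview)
Your overall strategy is exactly that of the paper: pick the interval with smallest right endpoint, observe that every interval meeting it must contain that right endpoint, and use Lemma~\ref{indo} to produce a domination. However, there is a genuine slip in the recursive step. You show that $I$ dominates $J$ (i.e., $N[J]\setminus\{J\}\subset N[I]$), which by definition means there is a cone $C$ with apex $J$ satisfying $(\cO(X):I)\subset C\subset(\cO(X),I)$. The vertex playing the role of ``$a$'' in Definition~\ref{grappolo} is therefore the \emph{dominating} vertex $I$, not the dominated vertex $J$. So to close the induction you must verify that $(\cO(X),I)$ and $(\cO(X):I)$ are grapes, not $(\cO(X),J)$ and $(\cO(X):J)$ as you wrote.

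This is not merely cosmetic: in general $J$ does \emph{not} dominate $I$ back (e.g.\ $J=[0,1]$, $I=[0,10]$, $K=[5,6]$ gives $K\in N[I]\setminus N[J]$), so there is no reason for $(\cO(X):J)$ to be contractible in $(\cO(X),J)$. The fix is immediate---replace $J$ by $I$ in your first paragraph, noting that $(\cO(X),I)=\cO(X\setminus\{I\})$ and $(\cO(X):I)=\cO(X\setminus N[I])$ are again interval order complexes on strictly smaller sets---but as written the inductive step does not go through. (For comparison, the paper uses the opposite naming convention: their $I$ is your $J$ and vice versa, and they correctly recurse on the dominating vertex.)
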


\begin{proof}
If $X = \emptyset $, then the result is clear.  Otherwise 
let $I=[a,b] \in X$ be an interval such that $b = \min \bigl\{ y \;|\; [x,y] \in X \bigr\}$.  The 
vertices of $O(X)$ adjacent to $I$ are the intervals of $X$ containing $b$.  If 
no interval in $X \setminus \{ I \}$ contains $b$, then $\cO(X)$ is a 
cone with apex $I$ and we are done.  Otherwise, let $J \in X$ be an interval 
containing $b$.  By construction we have $N [I] \subset N [J]$ 
(in the graph $O(X)$) and by Lemma~\ref{indo} we deduce that $J$ dominates 
$I$ in $\cO(X)$.  Since $\bigl( \cO(X) , J \bigr) = \cO \bigl( X \setminus \{J\} \bigr)$ 
and $\bigl( \cO(X) : J \bigr) = \cO \bigl( X \setminus N [J] \bigr)$, we 
conclude by induction on the cardinality of $X$.
\end{proof}

\begin{example}
The simplicial complex $\cO (X)$ may be a wedge of spheres of different dimensions.  
Let $X = \bigl\{ [0,2] , [0,6] , [1,3] , [4,7] , [5,8] \bigr\}$.  The graph $O(X)$ 
and the simplicial complex $\cO (X)$ are depicted in the following figure.
\begin{center}
\begin{minipage}{150pt}
$$ \xygraph {[] !~:{@{.}} 
!{<0pt,0pt>;<20pt,0pt>:} 
{\bullet} 
!{\save +<-11pt,0pt>*\txt{$\scriptstyle [0,2]$}  \restore}
[dd] {\bullet} 
!{\save +<-11pt,0pt>*\txt{$\scriptstyle [1,3]$}  \restore}
[urr] {\bullet} 
!{\save +<0pt,10pt>*\txt{$\scriptstyle [0,6]$}  \restore}
[urr] {\bullet} 
!{\save +<11pt,0pt>*\txt{$\scriptstyle [4,7]$}  \restore}
[dd] {\bullet} 
!{\save +<11pt,0pt>*\txt{$\scriptstyle [5,8]$}  \restore}
[rl] - [uu] - [dll] - [drr] [llll] - [uu] - [drr] - [dll] } $$

\medskip
\centerline{The graph $O(X)$}
\end{minipage} 
\hspace{10pt}
\begin{minipage}{150pt}
$$ \xygraph {[] !~:{@{.}} 
!{<0pt,0pt>;<20pt,0pt>:} 
[d] {\bullet} 
!{\save +<-11pt,0pt>*\txt{$\scriptstyle [0,6]$}  \restore}
[urr] {\bullet} 
!{\save +<-11pt,0pt>*\txt{$\scriptstyle [1,3]$}  \restore}
[rr] {\bullet} 
!{\save +<11pt,0pt>*\txt{$\scriptstyle [4,7]$}  \restore}
[dd] {\bullet} 
!{\save +<11pt,0pt>*\txt{$\scriptstyle [0,2]$}  \restore}
[ll] {\bullet} 
!{\save +<-11pt,0pt>*\txt{$\scriptstyle [5,8]$}  \restore}
[rl] - [uu] - [rr] - [dd] - [ll] } $$

\medskip
\centerline{The simplicial complex $\cO (X)$}
\end{minipage} 
\end{center}
The complex $\cO(X)$ is homeomorphic to $S^1 \vee S^0$.
\end{example}

\bigskip

We summarize in the following table the results obtained in this section on the 
homotopy types of the simplicial complexes associated to a (possibly multidirected) 
forest $F=(V,E)$ and of the interval order complex.  
Wedge of spheres means that the spheres have in general different dimensions and 
the wedge could be empty (i.e.~the simplicial complex could be contractible).

\medskip

\begin{center}
\begin{tabular}{|l|l|}
\hline
Name & Homotopy type \\[3pt]
\hline
\hline
Oriented forests & Wedge of spheres \\[3pt]
\hline
Independence complex & Contractible or sphere of dimension \\
& $i(F) - 1 = \gamma (F) - 1$ \\[3pt]
\hline
Dominance complex & Sphere of dimension \\
& $\alpha _0 (F) - 1 = \beta _1 (F) - 1$ \\[3pt]
\hline
Matching complex & Wedge of spheres \\[3pt]
\hline
Edge covering complex & Contractible or sphere of dimension \\
& $|E| - |V| + i(F) - 1 = |E| - |V| + \gamma (F) - 1$ \\[3pt]
\hline
Edge dominance complex & Sphere of dimension \\
& $|E| - \alpha _0 (F) - 1 = |E| - \beta _1 (F) - 1$ \\[3pt]
\hline
Interval order complex & Wedge of spheres \\[3pt]
\hline 
\end{tabular}
\end{center}

\end{document}